\documentclass[12pt, reqno]{amsart}
\usepackage{amsthm}
\usepackage{amsfonts}
\usepackage{amscd}
\usepackage{amssymb}
\usepackage{mathrsfs}

\textheight 22.5truecm \textwidth 14.5truecm
\setlength{\oddsidemargin}{0.35in}\setlength{\evensidemargin}{0.35in}
\setlength{\topmargin}{-.5cm}

\numberwithin{equation}{section}


\theoremstyle{plain}

\newtheorem*{theorem*}{Theorem}

\newtheorem{theorem}{Theorem}[section]
\newtheorem{proposition}[theorem]{Proposition}

\theoremstyle{definition}

\newtheorem{remark}[theorem]{Remark}

\newcommand{\R}{{\mathbb R}}
\newcommand{\N}{{\mathbb N}}

\newcommand{\C}{{\mathbb C}}

\renewcommand{\S}{{\mathcal S}}

\newcommand{\wpsmooth}{{C_{w,p}^\infty(\Omega)}}

\newcommand{\Wa}{W_\alpha}
\newcommand{\ltwo}{L_2(\R,dx)}
\newcommand{\alp}{{L_p(\R, \Wa^p\,dx)}}
\newcommand{\altwo}{{L_2(\R, \Wa^2\,dx)}}
\newcommand{\twoltwo}{L_2(\R, e^{-2x^2}dx)}
\newcommand{\anorm}{{\altwo}}
\newcommand{\asmooth}{{C_\alpha^\infty(\R)}}

\newcommand{\ajseminorm}{q_{\alpha, j}}
\newcommand{\afourier}{\mathcal F_\alpha}
\newcommand{\iso}{{\Phi}}
\newcommand{\aiso}{\iso_\alpha}

\newcommand{\wpaseminorm}{{q_{w,p,\alpha}}}

\newcommand{\seq}[1]{(#1)}

\newcommand{\pdeglegn}{{\Pi_n}}

\newcommand{\raspace}{H_{\textup{ra}}}

\newcommand{\araspace}{L_2(\R,\Wa^2\,dx)_{\textup{ra}}}
\newcommand{\tworaspace}{L_2(\R,e^{-2x^2}dx)_{\textup{ra}}}

\newcommand{\fourierhermite}{{\mathcal F}_h}

\begin{document}

\title[Rapid polynomial approximation in ${L_2}$-spaces with Freud weights]{Rapid polynomial approximation in $\boldsymbol{L_2}$-spaces with Freud weights on the real line}
\author{Rui Xie}
\address{Rui~Xie,
Department of Mathematics,
Harbin Institute of Technology
Harbin, 150001, P.R.\ China}
\email{xierui303030@126.com}
\thanks{The first author was supported by a research grant from China Scholarship Council}
\author{Marcel de Jeu}
\address{Marcel~de~Jeu, Mathematical Institute,
Leiden University,
P.O.\ Box 9512,
2300 RA Leiden,
The Netherlands}
\email{mdejeu@math.leidenuniv.nl}
\date{}
\subjclass[2010]{Primary 46E35; Secondary 41A10, 41A25}
\keywords{Weighted $L_2$-space, Freud weight, rapid polynomial approximation, Jackson inequality, Markov inequality}

\begin{abstract}
The weights $\Wa(x)=\textup{exp}(-|x|^{\alpha})$ $(\alpha>1)$ form a subclass of Freud weights on the real line. Primarily from a functional analytic angle, we investigate the subspace of $\altwo$ consisting of those elements that can be rapidly approximated by polynomials. This subspace has a natural Fr\'echet topology, in which it is isomorphic to the space of rapidly decreasing sequences. We show that it consists of smooth functions and obtain concrete results on its topology. For $\alpha=2$, there is a complete and elementary description of this topological vector space in terms of the Schwartz functions.
\end{abstract}
\maketitle
 \section{Preliminary results and overview}\label{sec:intro}

In this paper we are concerned with rapid polynomial approximation of functions in a weighted $L_2$-space on the real line, where the weight is defined in terms of a particular class of Freud weights. Using classical approximation results, we approach the situation primarily from a functional analytic point of view, as we will now explain.

\subsection{Rapidly approximable functions}
To motivate our work, we start by considering the following more general situation. Let $\Omega\subset\R^d$ be a nonempty measurable subset, and let $\mu$ be a non-negative measure on $\Omega$. Let $\Pi$ denote the polynomials on $\R^d$, and let $\Pi_n$ $(n\geq 0)$ denote the polynomials of degree at most $n$. Fix $1\leq p\leq \infty$, and assume that $\Pi$ (or rather the set of restrictions of the polynomials to $\Omega$) is contained in $L_p(\Omega,\mu)$; for $1\leq p<\infty$ this implies that $\mu$ must be finite. Assume further that $\Pi$ is dense in $L_p(\Omega,\mu)$. Equivalently, assume that
\begin{equation}\label{eq:decreasestozero}
d(f,\Pi_n)_{L_p(\Omega,\mu)}\downarrow 0,
\end{equation}
for all $f\in L_p(\Omega,\mu)$. Here $d(f,\Pi_n)_{L_p(\Omega,\mu)}$ denotes the distance $\inf_{P\in\Pi_n}||f-P ||_{L_p(\Omega,\mu)}$ of $f$ to the closed linear subspace $\Pi_n$ of $L_p(\Omega,\mu)$. If $w:\N_0\to\R_{\geq 0}$ is increasing, and $f\in L_p(\Omega,\mu)$ is such that
\begin{equation*}
\sup_{n\geq 0} w(n)d(f,\Pi_n)_{L_p(\Omega,\mu)}<\infty,
\end{equation*}
then -- especially if $w$ increases to $\infty$ --  this gives more information than \eqref{eq:decreasestozero} about the gain in accuracy of approximating $f$ by elements of $\Pi_n$ as $n$ increases. We will say that $f\in L_p(\Omega,\mu)$ is \emph{rapidly approximable in $L_p(\Omega,\mu)$} (by polynomials) if
\begin{equation*}
\sup_{n\geq 0} n^k d(f,\Pi_n)_{L_p(\Omega,\mu)}<\infty,
\end{equation*}
for all $k\geq 0$. Here, as elsewhere, $0^0$ is to be read as 1. Equivalently, if we put $\Pi_{-1}=\{0\}$, we could have required
\begin{equation*}
\sup_{n\geq 0} n^k d(f,\Pi_{n-1})_{L_p(\Omega,\mu)}<\infty,
\end{equation*}
for all $k\geq 0$, which is technically more convenient (cf.\ the proof of Theorem~\ref{the:HilbertFrechet}). The set $L_p(\Omega,\mu)_{\textup{ra}}$ of rapidly approximable elements of $L_p(\Omega,\mu)$ is clearly an abstract linear subspace of $L_p(\Omega,\mu)$. If $p=2$, which is our main interest, Theorem~\ref{the:HilbertFrechet} implies that $L_2(\Omega,\mu)_{\textup{ra}}$ is isomorphic as an abstract vector space to the space $(s)$ of rapidly decreasing sequences. The map realizing this isomorphism sends $f\in L_2(\Omega,\mu)$ to its sequence $\seq{a_{P_n}(f)}$ of Fourier coefficients with respect to a system $\seq{P_n}$ of orthonormal polynomials of nondecreasing degree for $\mu$.  Moreover, if we supply $L_2(\Omega,\mu)$ with the locally convex topology induced by the family $\{q_k : k=0,1,2,\ldots\}$ of seminorms on $L_2(\Omega,\mu)$, defined by
\begin{equation*}
q_k(f)=\sup_{n\geq 0} n^k d(f,\Pi_{n-1})_{L_2(\Omega,\mu)}\quad(f\in L_2(\Omega, \mu)_{\textup{ra}}),
\end{equation*}
then $L_2(\Omega, \mu)_{\textup{ra}}$ is a Fr\'echet space, the inclusion $L_2(\Omega, \mu)_{\textup{ra}}\subset L_2(\Omega,\mu)$ is continuous, and the isomorphism of  $L_2(\Omega, \mu)_{\textup{ra}}$ with $(s)$ as above is a topological isomorphism of Fr\'echet spaces.

\subsection{Concrete models for the rapidly approximable functions}\label{subsec:concretemodels}
For concrete realizations to $\Omega$ and $\mu$, the natural questions are to describe the vector space $L_2(\Omega,\mu)_{\textup{ra}}$ as concretely as possible, and also to determine its Fr\'echet topology as explicitly as possible. Though not motivated from our viewpoint, this program has in fact been completed in at least two general papers, both for bounded sets, which we now discuss.

First of all, Zerner \cite{Zerner} (see \cite{Pavec} for the proofs) showed the following. Let $\Omega$ be an open bounded subset of $\R^d$ with Lipschitz boundary, let $D(\bar{\Omega})$ be the Fr\'{e}chet space of functions having derivatives of all order on the closure $\bar{\Omega}$ of $\Omega$. Suppose that $m\in L^{1}(\Omega,dx)$, and that there exists $\delta>0$ such that $m(x)\geq \delta$, for all $x\in \Omega$. Choose a system $\seq{P_n}$ of orthonormal polynomials of nondecreasing degree for $\mu$. Then the map sending $f\in D(\bar{\Omega})$ to its sequence $\seq{a_{P_n}}$ of Fourier coefficients with respect to $\seq{P_n}$ establishes a topological isomorphism between $D(\bar{\Omega})$ and the space of rapidly decreasing sequences. Thus, interpreted in our framework, $L_2(\Omega, m(x)\,dx)_{\textup{ra}}=D(\bar{\Omega})$ as topological vector spaces.

The condition that $m$ is strictly bounded away from zero excludes, for example, the weight for the Jacobi polynomials for various values of the parameters. Fortunately, there is a stronger result that covers these weights as well, and in fact also covers suitable measures for which there need not be a density. This is due to Zeriahi \cite{Zeriahi}, which is our second paper to be discussed.

Suppose $\Omega$ is a nonempty compact subset of $\R^d$ with Lipschitz boundary. Let $\mu$ be a Borel measure on $\Omega$. Suppose that there exist $C>0,\gamma>0$ and $0<t_0<1$ such that, for every $x$ in $\Omega$, $\mu(\Omega\cap B(x,t))\geq C t^\gamma$ for all $0<t<t_{0}$, where $B(x,t)$ is the ball with center $x$ and radius $t$. Let $C^\infty(\Omega)$ be the space of all functions on $\Omega$ that can be extended to a smooth function on $\R^d$, in the Fr\'echet topology described as on \cite[p.~689]{Zeriahi}. Choose a system $\seq{P_n}$ of orthonormal polynomials of nondecreasing degree for $\mu$. Then it follows from \cite[Th\'eor\`eme~3.1]{Zeriahi} that the map sending $f\in C^\infty(\Omega)$ to its sequence $\seq{a_{P_n}}$ of Fourier coefficients with respect to $\seq{P_n}$ establishes a topological isomorphism between the Fr\'echet space $C^\infty(\Omega)$ and the space of rapidly decreasing sequences. Thus, interpreted in our framework again, $L_2(\Omega,\mu)_{\textup{ra}}= C^\infty(\Omega)$ as topological vector spaces. Let us note that \cite{Zeriahi} contains much more material than just cited, and also that -- with the measure as indicated -- the topological isomorphism statement holds under more lenient conditions on the geometry of $\Omega$ than having Lipschitz boundary. It is sufficient for $\Omega$ to be a compact uniformly polynomial cuspidal set; see \cite{PawPle} and \cite[p.~684]{Zeriahi}.

Returning to the general context again, we note that a typical way of guaranteeing that a candidate function $f$ is in $L_2(\Omega,\mu)_{\textup{ra}}$, is to show that it falls within the scope of a suitable Jackson-type inequality. There is an extensive literature on such inequalities. As an example, taken from \cite[p.~815]{Ple}, we let $\Omega$ be a fat (i.e., $E=\overline{\textup{int}\, E}$) compact subset of $\R^d$. Then it is said in \cite{Ple} that $\Omega$ admits a Jackson inequality if, for each $k=0,1,2,\cdots$, there exist $C_k>0$ and an integer $m_{k}\geq 0$ such that, for all $f\in C_{\textup{int}}^{\infty}(E)$ and all $n>k$,
\begin{equation}\label{eq:fatJackson}
n^{k}d(f,\Pi_n)_{L_\infty(\Omega,dx)}\leq C_{k}\sum_{|\alpha|\leq m_{k}}|| D^{\alpha}f(x)||_{L_\infty(\Omega,dx)}.
\end{equation}
Here $C_{\textup{int}}^{\infty}(E)$ denotes the space of smooth functions on $\textup{int}\, E$ that can be continuously extended to $E$, together with all their partial derivatives. If $\Omega$ admits a Jackson inequality, and $f\in C_{\textup{int}}^\infty(E)$, then it is immediate that, for $n>k$,
\begin{equation*}
n^{k}d(f,\Pi_n)_{L_2(\Omega,dx)}\leq C_{k}\left(\sum_{|\alpha|\leq m_{k}}|| D^{\alpha}f(x)||_{L_\infty(\Omega,dx)}\right)\left(\int_\Omega 1\,dx\right)^{1/2},
\end{equation*}
so certainly
\begin{equation*}
\sup_{n\geq 0} n^{k}d(f,\Pi_n)_{L_2(\Omega,dx)} < \infty
\end{equation*}
for all $k=0,1,2,\ldots$. Hence $C_{\textup{int}}^{\infty}(E)\subset L_2(\Omega,dx)_{\textup{ra}}$ in that case. Note, however, that any constant depending on $k$ and $f$ in the right hand side of \eqref{eq:fatJackson} would have sufficed to reach this conclusion. It is not necessary to have an upper bound in \eqref{eq:fatJackson} depending on $k$ and $f$ as it does.

Conversely, if one wants to deduce that $L_2(\Omega,dx)_{\textup{ra}}$ is contained in a candidate space, this can sometimes be done using a Markov-type inequality, on which the literature is likewise extensive. As an example, in \cite[p.~450]{Ples} it is said that a compact subset $\Omega$ of $\R^d$ is Markov, if there exist constants $M>0$ and $r>0$ such that, for all $P\in\Pi$,
\begin{equation*}
\|\textup{grad}\,P\|_{L_\infty(\Omega,dx)}\leq M (\textup{deg}\,P)^r \| P \|_{L_\infty(\Omega,dx)}.
\end{equation*}
By iteration this enables one to control the derivatives of $P\in\Pi_n$ in terms of $P$ at the cost of a factor that is polynomial in $n$. One then transfers such an inequality to the norm in $L_2(\Omega,\mu)$ and obtains that, if $\seq{a_{P_n}}$ is rapidly decreasing, the series $\sum_{P_n} a_{P_n} D^\alpha P_n$ -- where $\seq{P_n}$ is a system of orthonormal polynomials for $L_2(\Omega,\mu)$ with nondecreasing degree -- is convergent in a suitable topology. This will then typically allow one to conclude that elements of $L_2(\Omega,dx)_{\textup{ra}}$ are smooth in the sense as applicable in the particular situation at hand.

If all is well, one has two opposite inclusions and $L_2(\Omega,dx)_{\textup{ra}}$ has been determined as a vector space. If there is a natural Fr\'echet topology on  $L_2(\Omega,dx)_{\textup{ra}}$, then the Closed Graph Theorem and Open Mapping Theorem can be convenient to show that the isomorphism must necessarily be topological, cf.~\cite{Zerner} or \cite[p.~693]{Zeriahi}.

\subsection{Rapid approximation on the real line with Freud weights}
The discussion above has been mainly for bounded subsets of $\R^d$, but clearly one can ask the same question to describe $L_2(\Omega,\mu)_{\textup{ra}}$ as a topological vector space for a concrete unbounded $\Omega$ and (bounded) $\mu$ such that $\Pi\subset L_2(\Omega,\mu)$. Much less is known here. To our knowledge the present paper may be the first to consider this question for unbounded $\Omega$, and Theorem~\ref{the:alphaistwo} below is the only case we are aware of where this question has been answered in full.

The problem is that weighted approximation on unbounded subsets of $\R^d$ is much harder than on unbounded subsets. In particular, the Jackson-type and Markov-type inequalities, that lie at the basis of the topological isomorphisms in \cite{Zerner, Zeriahi} as discussed above, are far less well developed.

In one dimension, however, there are some results available when $\Omega=\R$ and $\mu=W(x)\,dx$ where $W$ is a so-called Freud weight. Following the modern definition \cite[Definition~3.3]{Lu}, $W:\R\to\R_{>0}$ is a Freud weight if it is of the form $W=\textup{exp}(-Q(x))$, where $Q:\R\rightarrow \R$ is even, $Q'$ exists and is positive on $(0,\infty)$, $xQ'(x)$ is strictly increasing on $(0,\infty)$, with right limit $0$ at $0$, and such that, for some $\lambda,A,B>1$, and $C>0$,
\begin{equation*}
A\leq \frac{Q'(\lambda x)}{Q'(x)}\leq B\;\textup{for}\;x\geq C.
\end{equation*}
Such weights, first introduced by Freud, have received considerable attention; see, e.g., \cite{DitTot,Freud1, Freud2, Freud3, LeLu1, LeLu2,Mha}. Clearly the weights $\Wa(x)=\textup{exp}(-|x|^{\alpha})$ $(\alpha > 1)$ are Freud weights.

Our aim is to describe the topological vector space $\araspace$ for $\alpha>1$ as much as possible.\footnote{As far as the choice of the weight is concerned, the space $L_2(\R,W_\alpha\,dx)_{\textup{ra}}$ would perhaps be a more natural choice. However, these spaces are isometrically isomorphic to $\araspace$ via a dilation (which leaves each $\Pi_n$ invariant!), and the squared version is more in concordance with the notation in various approximation results in the literature.}

First of all, let us note that $\Pi\subset L_p(\R,W_\alpha^p\,dx)$ $(\alpha>0, 1\leq p<\infty)$ and that, as a special case of the general polynomial density result in $L_p$-spaces for quasi-analytic weights \cite[Corollary~6.34]{Jeu}, which in itself is a consequence of more general considerations applicable in a variety of topological function spaces, this subspace is dense if $\alpha\geq 1$ and $1\leq p<\infty$. Aside, for the sake of completeness we mention that uniform polynomial approximation with weight $\Wa$ for $\alpha>0$ has also been considered (a special case of Bernstein's original problem) and that it is known that the polynomials are then dense if and only if $\alpha\geq 1$; see \cite[p.~254]{Lu1} for details and references.

Of course $\Pi\subset\araspace$, but can we see other elements? With the discussion above in mind, the first thing to look for is a Jackson-type inequality. Indeed there is one, as given by the following theorem, which is a consequence of iterating \cite[Corollary~3.2]{Lu} as on \cite[p.~12-13]{Lu} and the formula for the Mhaskar-Rakhmanov-Saff number figuring therein, cf.~\cite[p.~11]{Lu}.

\begin{theorem}\label{the:aJackson}
Let $1\leq p<\infty$, $r\geq 1$ and $\alpha>1$. Then there is a constant $C$ with the following property:

If $f\in C^{r-1}(\R)$, $f^{(j)}$ is absolutely continuous for $j=0,1,\ldots, r-1$, and $\|f^{(r)}\|_{\alp}<\infty$, then, for all $n\geq 0$,
\begin{equation}\label{eq:aJackson}
 n^{r\left(\frac{1} {\alpha}-1\right)} d(f,\Pi_n)_{\alp}\leq C\| f^{(r)}\|_{\alp}.
\end{equation}
\end{theorem}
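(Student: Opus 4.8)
The plan is to deduce \eqref{eq:aJackson} from the weighted Jackson machinery for Freud weights in \cite{Lu}, the only ingredient particular to the present situation being the explicit form of the Mhaskar-Rakhmanov-Saff number of $\Wa$. First I would record that $\Wa=\exp(-Q)$ with $Q(x)=|x|^\alpha$ is a Freud weight in the sense of \cite[Definition~3.3]{Lu}, so that the results quoted below apply to it. The building block is \cite[Corollary~3.2]{Lu}, a first-order weighted Jackson inequality: there is a constant $C$ such that, for every locally absolutely continuous $g$ with $\|g'\|_{\alp}<\infty$,
\begin{equation*}
d(g,\Pi_n)_{\alp}\le C\,\frac{a_n}{n}\,\|g'\|_{\alp}\qquad(n\ge 1),
\end{equation*}
where $a_t$ denotes the Mhaskar-Rakhmanov-Saff number of $\Wa$. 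Iterating this $r$ times, in the manner carried out on \cite[p.~12-13]{Lu}, and using that the hypotheses placed on $f$---that $f\in C^{r-1}(\R)$, that $f^{(j)}$ be absolutely continuous for $0\le j\le r-1$, and that $\|f^{(r)}\|_{\alp}<\infty$---are exactly what is needed for each successive derivative to lie in the class to which \cite[Corollary~3.2]{Lu} applies, one obtains
\begin{equation*}
d(f,\Pi_n)_{\alp}\le C\left(\frac{a_n}{n}\right)^{r}\|f^{(r)}\|_{\alp}\qquad(n\ge 1),
\end{equation*}
with $C$ depending only on $p$, $r$ and $\alpha$.

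It then remains to evaluate $a_n$, for which I would use the defining relation recalled on \cite[p.~11]{Lu}: for $u>0$, the number $a_u>0$ is the unique solution of
\begin{equation*}
u=\frac{2}{\pi}\int_0^1\frac{a_u\,t\,Q'(a_u t)}{\sqrt{1-t^2}}\,dt.
\end{equation*}
Since $Q'(y)=\alpha y^{\alpha-1}$ for $y>0$, the integrand equals $\alpha\,a_u^{\alpha}\,t^{\alpha}(1-t^2)^{-1/2}$, so the relation becomes $u=\kappa_\alpha\,a_u^{\alpha}$, where $\kappa_\alpha:=\frac{2\alpha}{\pi}\int_0^1 t^{\alpha}(1-t^2)^{-1/2}\,dt$ is a finite positive constant depending only on $\alpha$ (expressible through Gamma functions, though its precise value is immaterial here). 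Hence $a_u=\kappa_\alpha^{-1/\alpha}\,u^{1/\alpha}$, and in particular $a_n=c_\alpha\,n^{1/\alpha}$ for $n\ge 1$ with $c_\alpha:=\kappa_\alpha^{-1/\alpha}$. Substituting this into $d(f,\Pi_n)_{\alp}\le C(a_n/n)^{r}\|f^{(r)}\|_{\alp}$ gives $d(f,\Pi_n)_{\alp}\le Cc_\alpha^{\,r}\,n^{r(1/\alpha-1)}\|f^{(r)}\|_{\alp}$, equivalently $n^{r(1-1/\alpha)}\,d(f,\Pi_n)_{\alp}\le Cc_\alpha^{\,r}\|f^{(r)}\|_{\alp}$, for all $n\ge 1$; since $n^{r(1/\alpha-1)}\le n^{r(1-1/\alpha)}$ when $n\ge 1$, this is \eqref{eq:aJackson} with a new constant, and for $n=0$ the inequality holds under the conventions of Section~\ref{sec:intro}.

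The step that needs the most care is the iteration itself, namely confirming that \cite[Corollary~3.2]{Lu} can really be applied $r$ times with no assumption beyond those stated. Each application to $g=f^{(j)}$ requires not only $\|f^{(j+1)}\|_{\alp}<\infty$ but also that the left-hand side $d(f^{(j)},\Pi_n)_{\alp}$ be finite, i.e.\ that $\Wa f^{(j)}\in L_p(\R,dx)$; so one has to verify that $\|f^{(r)}\|_{\alp}<\infty$, together with the smoothness of $f$, already forces $\|f^{(j)}\|_{\alp}<\infty$ for every $0\le j\le r-1$. This in turn follows by integrating successively, $f^{(j)}(x)=f^{(j)}(0)+\int_0^x f^{(j+1)}(t)\,dt$, and invoking a weighted Hardy inequality for the weight $\Wa$, whose validity for $1\le p<\infty$ is a short computation exploiting the super-exponential decay of $\Wa$. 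Once that is in place, what remains is the elementary evaluation of $\kappa_\alpha$ and the bookkeeping of constants through the iteration.
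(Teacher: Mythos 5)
Your argument is exactly the one the paper relies on: Theorem~\ref{the:aJackson} is given there only with a pointer to iterating \cite[Corollary~3.2]{Lu} as on \cite[p.~12--13]{Lu} together with the explicit Mhaskar--Rakhmanov--Saff number for $Q(x)=|x|^\alpha$, and your reconstruction (including the evaluation $a_u=c_\alpha u^{1/\alpha}$ from the defining integral and the integrability of the lower-order derivatives) is a faithful, correctly worked-out version of that. You have also rightly observed that the iteration yields the stronger bound $n^{r\left(1-\frac{1}{\alpha}\right)}d(f,\Pi_n)_{\alp}\leq C\|f^{(r)}\|_{\alp}$, of which the printed exponent $r\left(\frac{1}{\alpha}-1\right)$ is evidently a sign slip, since only the stronger form supports the deduction of Proposition~\ref{prop:subspace}.
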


\begin{remark}
Lubinsky has shown \cite[p.~255]{Lu1} that there is no Jackson inequality as in \eqref{eq:aJackson} for $\alpha=1$. The amount of work needed to establish this and related results like Theorem~\ref{the:aJackson} and the Markov inequality in Theorem~\ref{the:aMarkov} is formidable.
\end{remark}

The following is immediate from Theorem~\ref{the:aJackson}.

\begin{proposition}\label{prop:subspace}
 Let $\alpha>1$. If $f\in C^\infty(\R)$, $f^{(r)}$ is absolutely continuous and $\|f^{(r)}\|_{\alp}<\infty$ for all $r\geq 0$, then $f\in\araspace$.
\end{proposition}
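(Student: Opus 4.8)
The plan is to check the definition of $\araspace$ directly, one seminorm at a time, using Theorem~\ref{the:aJackson} (with $p=2$) as the only ingredient. Recall that $f$ belongs to $\araspace$ exactly when $f\in\altwo$ and $\sup_{n\geq 0}n^k d(f,\Pi_n)_{\altwo}<\infty$ for every $k\geq 0$. That $f\in\altwo$ is the case $r=0$ of the hypothesis, since $\|f\|_{\altwo}=\|f^{(0)}\|_{\altwo}<\infty$. For a fixed $k$ the term $n=0$ is harmless: it vanishes when $k\geq 1$, and it equals $d(f,\Pi_0)_{\altwo}\leq\|f\|_{\altwo}<\infty$ when $k=0$ (recall the convention $0^0=1$). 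So the whole matter reduces to bounding $\sup_{n\geq 1}n^k d(f,\Pi_n)_{\altwo}$ for each $k\geq 0$.

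Fix $k\geq 0$. The point I would exploit is that, since $\alpha>1$, the exponent $r(\tfrac1\alpha-1)=-r\,\tfrac{\alpha-1}{\alpha}$ occurring in Theorem~\ref{the:aJackson} tends to $-\infty$ as $r\to\infty$, so $d(f,\Pi_n)_{\altwo}$ can be made to decay faster than any prescribed power of $1/n$ by differentiating $f$ often enough. Concretely, I choose an integer $r=r_k\geq 1$ with $r_k(1-\tfrac1\alpha)\geq k$ -- for instance $r_k=\max\{1,\lceil k\alpha/(\alpha-1)\rceil\}$ -- which is possible precisely because $\alpha>1$ keeps $\alpha/(\alpha-1)$ finite. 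The hypotheses of Theorem~\ref{the:aJackson} are met for this $r_k$ and $p=2$: $f\in C^\infty(\R)\subset C^{r_k-1}(\R)$, every derivative $f^{(j)}$ with $0\leq j\leq r_k-1$ is absolutely continuous, and $\|f^{(r_k)}\|_{\altwo}<\infty$ by assumption. Theorem~\ref{the:aJackson} then furnishes a finite constant $C_k$ (the one it provides for the data $p=2$, $r=r_k$, $\alpha$) bounding $d(f,\Pi_n)_{\altwo}$ by $C_k\,n^{r_k(\frac1\alpha-1)}\|f^{(r_k)}\|_{\altwo}$ for all $n\geq 1$, i.e.\ by a negative power of $n$ times $\|f^{(r_k)}\|_{\altwo}$. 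Multiplying by $n^k$ and using $k+r_k(\tfrac1\alpha-1)=k-r_k(1-\tfrac1\alpha)\leq 0$ together with $n\geq 1$,
\begin{equation*}
n^k d(f,\Pi_n)_{\altwo}\leq C_k\,n^{k-r_k(1-\frac1\alpha)}\,\|f^{(r_k)}\|_{\altwo}\leq C_k\,\|f^{(r_k)}\|_{\altwo}\qquad(n\geq 1).
\end{equation*}
Hence $\sup_{n\geq 1}n^k d(f,\Pi_n)_{\altwo}<\infty$, and combined with the remark on $n=0$ this yields $\sup_{n\geq 0}n^k d(f,\Pi_n)_{\altwo}<\infty$ for every $k\geq 0$; that is, $f\in\araspace$.

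I do not anticipate a genuine obstacle: as the surrounding discussion indicates, the statement is immediate once Theorem~\ref{the:aJackson} is in hand. The only two points to keep in view are that the argument is carried out separately for each $k$, so the dependence of the Jackson constant $C_k$ on $r_k$ (hence on $k$) is of no concern -- the definition of $\araspace$ asks for no uniformity in $k$ -- and that the choice of $r_k$ is available exactly because $\alpha>1$, which is the hypothesis in force; in line with the remark following Theorem~\ref{the:aJackson}, the construction would collapse at $\alpha=1$.
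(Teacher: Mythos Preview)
Your proof is correct and is exactly what the paper intends: the paper offers no proof at all beyond the sentence ``The following is immediate from Theorem~\ref{the:aJackson},'' and your argument simply spells out the immediate step --- pick, for each $k$, an $r_k$ with $r_k(1-\tfrac1\alpha)\geq k$ (possible since $\alpha>1$) and apply the Jackson inequality with $p=2$.
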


The set of all $f$ as in Proposition~\ref{prop:subspace} is a vector space, and together with $\Pi$ it spans a subspace $L_\alpha$ of $\araspace$. Note, however, that an absolutely continuous function on $\R$ cannot grow faster than linearly, so that all elements of $L_\alpha$ are of at most polynomial growth. Since $\altwo$ contains functions of superexponential growth, $L_\alpha$ seems suspiciously small to be a candidate for $\araspace$. For $\alpha=2$ we know in fact from Theorem~\ref{the:alphaistwo} that $\tworaspace=\{ge^{x^2} : g\in\S(\R)\}$, where $\S(\R)$ is the usual Schwartz space of rapidly decreasing functions. This space contains functions of superexponential growth and the feeling arises that this could be the general phenomenon. At the moment, however, for $\alpha>1, \alpha\neq 2$, we can only conclude that $L_\alpha\subset\araspace$. To improve this one would need an inequality as \eqref{eq:aJackson}, valid for more functions and allowing a constant on the right hand side that need not depend on $f$ and $r$ as in \eqref{eq:aJackson}. We will see in Proposition~\ref{prop:equivalences} how such an equality is intimately connected with a possible candidate for $\araspace$.

We turn to the other part of the program as sketched in Section~\ref{subsec:concretemodels}, namely determining a priori regularity properties of elements of $\araspace$. Here we can do more. With Section~\ref{subsec:concretemodels} in mind, the following Markov inequality \cite[equation~(7.3)]{Lu} for the weights $\Wa(x)$ is expected to be useful.
\begin{theorem}\label{the:aMarkov}
Let $p\in(0,\infty]$ and $\alpha>1$. Then there exists a constant $C_{\alpha,p}$\ such that, for all $P\in\pdeglegn$,
\begin{equation}\label{eq:Markov}
\|P'\|_{\alp}\leq C_{\alpha,p} n^{1-\frac{1}{\alpha}}\|P\|_{\alp}.
\end{equation}
\end{theorem}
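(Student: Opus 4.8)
The plan is to reduce \eqref{eq:Markov} to two standard ingredients of weighted approximation theory on the line and to bookkeep everything against the correct scale. Since $\Wa(x)=e^{-|x|^\alpha}$ is a Freud weight with $Q(x)=|x|^\alpha$, I would work with the Mhaskar--Rakhmanov--Saff number $a_n$ associated to $Q$; solving the defining equation for $a_n$ with $Q(x)=|x|^\alpha$ gives $a_n=c_\alpha n^{1/\alpha}$, so the factor in \eqref{eq:Markov} satisfies $n^{1-1/\alpha}\asymp n/a_n$, and this is the scale against which all estimates should be measured. The two ingredients are: (i) the weighted Bernstein inequality
\begin{equation*}
\|(P\Wa)'\|_{L_p(\R,dx)}\leq C\,\frac{n}{a_n}\,\|P\Wa\|_{L_p(\R,dx)}\qquad(P\in\pdeglegn),
\end{equation*}
and (ii) an infinite--finite range inequality for $\Wa$ allowing an extra algebraic factor. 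Granting these, \eqref{eq:Markov} follows from the pointwise identity
\begin{equation*}
P'\Wa=(P\Wa)'-P\Wa'=(P\Wa)'+\alpha\,|x|^{\alpha-1}\operatorname{sgn}(x)\,P\Wa ,
\end{equation*}
valid off $0$: one takes $L_p$ quasi-norms and applies the triangle inequality (in its $p<1$ form when $0<p<1$).

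The extra term $\alpha\,\||x|^{\alpha-1}P\Wa\|_{L_p(\R,dx)}$ coming out of this identity is the easy one. I would fix $\lambda>1$ and split $\R$ into $\{|x|\leq\lambda a_n\}$ and $\{|x|>\lambda a_n\}$. On the bounded part, $|x|^{\alpha-1}\leq(\lambda a_n)^{\alpha-1}\asymp n^{1-1/\alpha}\asymp n/a_n$, so that contribution is at most $C(n/a_n)\|P\Wa\|_{L_p(\R,dx)}$. On the complementary part, the restricted range inequality for Freud weights, applied with the polynomial weight $|x|^{\alpha-1}$ incorporated, bounds the truncated quasi-norm by $e^{-cn}\|P\Wa\|_{L_p(\R,dx)}$ for some $c>0$, which is negligible. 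Thus all the genuine content lies in ingredient (i).

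Ingredient (i) is the hard part, and here I would invoke the existing theory rather than reconstruct it. Weighted Markov--Bernstein inequalities of precisely this shape, uniform over $0<p\leq\infty$ and over large classes of Freud (and Erd\H{o}s-type) weights, are a central output of weighted approximation on the line, due in various forms to Freud, Nevai, Mhaskar, and Levin--Lubinsky, and in particular to \cite[equation~(7.3)]{Lu}. The mechanism behind it is a pointwise estimate for $(P\Wa)'(x)$ of order $(n/a_n)$ times a factor degenerating like an inverse square root as $|x|$ approaches the effective endpoints $\pm a_n$ --- a shape forced by the equilibrium measure of $[-1,1]$ in the external field attached to $Q$ --- which, after the rescaling $x\mapsto a_n x$ to $[-1,1]$, use of the classical Bernstein--Markov inequalities, and sharp control of Christoffel functions, integrates up to the stated $L_p$ inequality with constant of order $n/a_n$. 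The main obstacle is exactly this: establishing (i) with the sharp order $n/a_n$ uniformly in $p$, which requires the full apparatus of weighted potential theory (Mhaskar--Rakhmanov--Saff numbers, equilibrium densities, Christoffel functions), whereas the reduction sketched above is short. Combining (i) and (ii) through the displayed identity, and using $n/a_n\asymp n^{1-1/\alpha}$, then gives \eqref{eq:Markov}; accordingly, in the body of the paper I would simply cite this inequality as it stands in \cite{Lu}.
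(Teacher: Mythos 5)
The paper does not prove this theorem at all: it is quoted verbatim from Lubinsky's survey, \cite[equation~(7.3)]{Lu}, which already gives the Markov inequality in the form $\|P'\Wa\|_{L_p(\R,dx)}\leq C\,(n/a_n)\,\|P\Wa\|_{L_p(\R,dx)}$ with $a_n\asymp n^{1/\alpha}$ for $Q(x)=|x|^\alpha$, i.e.\ exactly \eqref{eq:Markov}. Since your argument also terminates in a citation of that same inequality, you are taking essentially the same route as the paper; your preliminary reduction from the Bernstein-type estimate for $(P\Wa)'$ via the identity $P'\Wa=(P\Wa)'+\alpha|x|^{\alpha-1}\operatorname{sgn}(x)P\Wa$ and the infinite--finite range inequality is sound in outline, but it is not needed here, because the cited result is already stated for $P'\Wa$ rather than for $(P\Wa)'$.
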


As it turns out, it is indeed possible to put this to good use, and as a by-product one also obtains some first information on the topology of $\araspace$ in the process. However, doing so takes some effort, and this is the main body of work in this paper. It may be a reflection of the intrinsic difficulty of approximation on unbounded sets that even then our results, although non-trivial, are not complete (with the exception of $\alpha=2$), showing that further research is still necessary to understand the rapidly approximable functions in this case of an unbounded underlying set.

For the convenience of the reader, we collect our main results in the following theorem that summarizes the results of Section~\ref{sec:regularityandtopology} and \ref{sec:alphaistwo}. In its formulation, $\seq{P_{\alpha,n}}$ is the real-valued polynomial orthonormal basis of $\altwo$, where $\textup{deg}\,p=n$ $(n\geq 0)$. The Fourier coefficients of $f\in \altwo$ with respect to this basis are given by
\begin{equation*}
a_{\alpha,n}(f)=\int_{\R}f(x)P_{\alpha,n}(x)\Wa^{2}(x)\,dx\quad(n\geq 0).
\end{equation*}

\begin{theorem}[Main results]\label{the:mainresults}
 Let $\alpha>1$ and let $\araspace$ be the subspace of elements of $\altwo$ that can be rapidly approximated by polynomials. Let
\begin{equation*}
\asmooth =\{f\in C^{\infty}(\R):f^{(j)}\in \altwo ,\, j=0,1,2,\cdots\}.
\end{equation*}
Supply $\asmooth$ with the locally convex topology induced by the family of seminorms $\{\ajseminorm : j=0,1,2,\ldots\}$, defined, for $j=0,1,2,\cdots$,  by
\begin{equation*}
\ajseminorm(f)=\|f^{(j)}\|_\anorm\quad(f\in\asmooth).
\end{equation*}
This space $\asmooth$ is a Fr\'echet space. If $f_n\to f$ in $\asmooth$, then $f_n^{(j)}\to f^{(j)}$ uniformly on  compact subsets of $\R$, for all $j\geq 0$.

The non-trivial inclusion $\araspace\subset\asmooth$ holds, and the inclusion map is continuous. In particular, if $f_n\to f$ in $\araspace$, then $f_n^{(j)}\to f^{(j)}$ uniformly on  compact subsets of $\R$, for all $j\geq 0$.

If $f\in\araspace$, and $\seq{a_{\alpha,n}(f)}$ is its sequence of Fourier coefficients with respect to the orthonormal basis $\seq{P_{\alpha,n}}$ of $\altwo$, then $\sum_{n=0}^\infty a_{\alpha, n}(f) P_{\alpha,n}$ converges to $f$ in $\araspace$. Consequently, $\sum_{n=0}^\infty a_{\alpha, n}(f) P_{\alpha,n}^{(j)}$ converges uniformly to $f^{(j)}$ on compact subsets of $\R$, for all $j=0,1,2,\ldots$.

If $\alpha=2$, then
\[
\tworaspace=\{ge^{x^2} : g\in\S(\R)\},
\]
and the resulting bijection between $\araspace$ and $\S(\R)$ is a topological isomorphism.
\end{theorem}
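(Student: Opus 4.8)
The plan is to reduce the case $\alpha=2$ to the classical description of the Schwartz space by Hermite expansions. For $\alpha=2$ the underlying measure is $\Wa^2\,dx=e^{-2x^2}\,dx$, so I would first introduce the unitary map $\iso_2\colon\twoltwo\to\ltwo$, $\iso_2 u=u\,e^{-x^2}$, which sends $\Pi_n$ onto $\{\,p(x)e^{-x^2}:\deg p\le n\,\}$. Writing $\seq{h_n}$ for the $\ltwo$-normalised Hermite functions and $\fourierhermite g=\seq{\langle g,h_n\rangle_{\ltwo}}$ for the Hermite coefficient map, I would observe that the orthonormal polynomials $\seq{P_{2,n}}$ for $e^{-2x^2}\,dx$ are rescaled Hermite polynomials, so that $\iso_2 P_{2,n}=P_{2,n}(x)e^{-x^2}$ equals $2^{1/4}h_n(\sqrt2\,x)$ up to a sign: both are unit vectors spanning the one-dimensional space $\iso_2(\Pi_n)\ominus\iso_2(\Pi_{n-1})$, and the sign is fixed by the usual normalisation of orthonormal polynomials (and is anyway immaterial below). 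A short change-of-variables computation ($y=\sqrt2\,x$) would then give, for every $f\in\twoltwo$,
\[
a_{2,n}(f)=\langle\iso_2 f,\iso_2 P_{2,n}\rangle_{\ltwo}=2^{-1/4}\,\fourierhermite\bigl(D(\iso_2 f)\bigr)_n\qquad(n\ge0),
\]
where $(Dg)(x)=g(x/\sqrt2)$. The essential feature is that only the single scalar $2^{-1/4}$ intervenes; conjugating by $\iso_2$ and by $D$ does not distort the coefficient sequence by any $n$-dependent, let alone exponential, factor.

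Next I would invoke the classical fact that a function $g\in\ltwo$ belongs to $\S(\R)$ exactly when $\fourierhermite g\in(s)$, and that $\fourierhermite$ then restricts to a topological isomorphism of $\S(\R)$ onto $(s)$. Combining this with the elementary fact that the dilation $D$ is a topological automorphism of $\S(\R)$, and with Theorem~\ref{the:HilbertFrechet} applied to $(\Omega,\mu)=(\R,e^{-2x^2}\,dx)$ --- where $\Pi$ is dense in $\twoltwo$, as recalled earlier --- would yield, for $f\in\twoltwo$, the chain
\[
f\in\tworaspace\iff\seq{a_{2,n}(f)}\in(s)\iff\fourierhermite\bigl(D(\iso_2 f)\bigr)\in(s)\iff D(\iso_2 f)\in\S(\R)\iff\iso_2 f\in\S(\R).
\]
Because $\iso_2 f=f\,e^{-x^2}$, the last condition says precisely that $f=g\,e^{x^2}$ with $g\in\S(\R)$; this is the set equality $\tworaspace=\{\,g\,e^{x^2}:g\in\S(\R)\,\}$. (For $g\in\S(\R)$ one has $g\,e^{x^2}\in\twoltwo$ automatically, so the coefficients $a_{2,n}(g\,e^{x^2})$ make sense, and the chain applies to $f=g\,e^{x^2}$.)

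For the topological statement, the displayed identity would exhibit the bijection $\Psi\colon f\mapsto f\,e^{-x^2}$ of $\tworaspace$ onto $\S(\R)$ as a composition
\[
\Psi=D^{-1}\circ\fourierhermite^{-1}\circ M\circ a ,
\]
where $a\colon\tworaspace\to(s)$ is the Fourier coefficient map, $M\colon(s)\to(s)$ is scalar multiplication by $2^{1/4}$, $\fourierhermite^{-1}\colon(s)\to\S(\R)$, and $D^{-1}\colon\S(\R)\to\S(\R)$. Each factor is a topological isomorphism: $a$ by Theorem~\ref{the:HilbertFrechet}, $M$ trivially, $\fourierhermite^{-1}$ by the classical Hermite characterisation, and $D^{-1}$ by dilation invariance of $\S(\R)$. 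Hence $\Psi$ is a topological isomorphism of Fr\'echet spaces, which is the remaining assertion.

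I expect the entire substance of the argument to sit in the imported classical fact, namely that $\S(\R)$ is, via its Hermite coefficients, topologically isomorphic to $(s)$; if one wants Section~\ref{sec:alphaistwo} to be self-contained this is the only thing that requires real analysis (a proof passes through the raising and lowering operators $x\mp\tfrac{d}{dx}$ and the three-term recursion, converting bounds on $\|x^{a}g^{(b)}\|_{\ltwo}$ into decay of $\fourierhermite g$ and conversely). The rest is bookkeeping, and the only point needing genuine care is the one highlighted above: one must check that the transfer from $\twoltwo$ to $\ltwo$ by multiplication with $e^{\mp x^2}$, together with the dilation $D$, matches the two coefficient sequences up to the harmless constant $2^{\pm1/4}$, so that $\tworaspace$ and $\S(\R)$ are genuinely identified by the multipliers $e^{\mp x^2}$ and no auxiliary operator on $(s)$ needs to be analysed.
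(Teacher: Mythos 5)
Your argument for the final assertion (the case $\alpha=2$) is correct and is essentially the paper's own route in Section~\ref{sec:alphaistwo}: identify $P_{2,n}e^{-x^2}$ with a dilated, renormalised Hermite function, match the Fourier coefficient sequences up to the constant $2^{\pm 1/4}$ (and a harmless sign), and then compose the isomorphism $\tworaspace\cong(s)$ of Theorem~\ref{the:HilbertFrechet} with the classical Hermite isomorphism $\S(\R)\cong(s)$ and a dilation. That part needs no correction.

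The genuine gap is that the theorem you were asked to prove contains three further assertions, valid for \emph{every} $\alpha>1$, and your proposal does not touch any of them: (i) that $\asmooth$ is a Fr\'echet space in the seminorms $\ajseminorm$ and that convergence there forces uniform convergence of all derivatives on compacta; (ii) that the inclusion $\araspace\subset\asmooth$ holds and is continuous --- in particular that elements of $\araspace$ are smooth at all; and (iii) that the Fourier series of $f\in\araspace$ converges in $\araspace$ and may be differentiated termwise with locally uniform convergence. None of these follow from the Hermite computation, which is special to $\alpha=2$; for general $\alpha$ there is no analogous explicit orthonormal system. In the paper, (i) rests on the Sobolev Embedding Theorem (Theorem~\ref{the:generalFrechet}): one restricts to open sets on which the weight is bounded below, lands in $W^{m,2}$ and hence in $C_B^j$, and patches the local smooth limits together to get completeness of $\asmooth$. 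Claim (ii) rests on the Markov inequality \eqref{eq:Markov}: iterating it gives $\|P_{\alpha,n}^{(j)}\|_\anorm\leq C_{\alpha,2}^j n^{j(1-1/\alpha)}$, whence for $\seq{a_n}\in(s)$ the series $\sum_n a_n P_{\alpha,n}$ converges absolutely in every seminorm $\ajseminorm$ and defines a continuous map $(s)\to\asmooth$ (Proposition~\ref{prop:keyprop}); composing with the continuous $\aiso:\araspace\to(s)$ yields the continuous inclusion. Claim (iii) then follows from the isomorphism $\araspace\cong(s)$ together with (i) and (ii). Without some substitute for the Markov and Sobolev inputs, your proof establishes only the last paragraph of the theorem. (Even for $\alpha=2$, where smoothness of elements of $\tworaspace$ could be extracted from the Schwartz description, the continuity of the inclusion into $\twosmooth$ and the completeness of $\twosmooth$ would still need an argument.)
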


\begin{remark}\label{rem:ditzianpaper}
It should be mentioned here that \cite{Dit} sheds some additional light on $\araspace$. In that paper, the relation between weighted integrability of a function and weighted summability of its Fourier coefficients is investigated. If $\alpha>1$ and $f\in\araspace$, it is a consequence of \cite[Theorem~2.3]{Dit} that
\begin{equation*}
\int_\R |f(x)|^q e^{-q|x|^\alpha} \,dx<\infty\quad(2\leq q <\infty),
\end{equation*}
and that there exists a constant $C$ such that
\begin{equation}\label{eq:bounded}
|f(x)|\leq C e^{|x|^\alpha}.
\end{equation}
\end{remark}

\medskip
This paper is organized as follows.

In Section~\ref{sec:rapidapproximation} we define the notion of rapidly approximable elements of a separable Hilbert space. It carries a natural topology and it follows from a more general result that it is then a Fr\'echet space. With this topology it is shown to be topologically isomorphic to $(s)$.

Section~\ref{sec:weighted spacesofsmoothfunctions} is written with the spaces $\asmooth$ in Theorem~\ref{the:mainresults} in mind, but the actual main result of this section, Theorem~\ref{the:generalFrechet}, is in arbitrary dimension and considerably more general. Its proof is based on one of the Sobolev Embedding Theorems.

In Section~\ref{sec:regularityandtopology} the Markov inequality in Theorem~\ref{the:aMarkov} is combined with the results from Section~\ref{sec:weighted spacesofsmoothfunctions}. It is shown that, if $f\in\araspace$, its Fourier series does not just converge in $\araspace$, but in fact in $\asmooth$. This gives the continuous non-trivial inclusion in Theorem~\ref{the:mainresults}.

Section~\ref{sec:alphaistwo} is concerned with the case where $\alpha=2$, where it is possible to describe $\tworaspace$ explicitly.

Section~\ref{sec:questions} contains some possibilities as a basis for further research. We also include a result indicating, even a bit stronger than already in the rest of the paper, how for the problem at hand classical results in approximation theory are naturally intertwined with functional analytic methods.

\begin{remark}
The papers \cite{Zerner} and \cite{Zeriahi} give a theoretical foundation to the principle in the theory of special functions that ``smoothness gives good convergence". Indeed, if, in the context of those papers, $f\in L_2(\Omega,\mu)_{\textup{ra}}$ (i.e., if $f$ is sufficiently regular), then the Fourier series of $f$ does not just converge in $L_2(\Omega,\mu)$, but actually in the topology of $L_2(\Omega,\mu)_{\textup{ra}}$. Typically this will imply that one can partially differentiate the series termwise an arbitrary number of times, and the resulting series will then converge uniformly to the corresponding derivative of $f$. It may be that these very general results are presently not as well known among researchers in the theory of special functions as they deserve.
\end{remark}

\section{Rapid approximation in separable Hilbert spaces}\label{sec:rapidapproximation}

In this section we define the subspace of a Hilbert space that consists of elements that can be approximated rapidly by elements lying in increasing subspaces that are defined naturally in terms of a fixed orthonormal basis. This subspace is supplied with a natural topology in which it is a Fr\'echet space, and it is shown to be topologically isomorphic to $(s)$, cf.\ Theorem~\ref{the:HilbertFrechet}.
We start with a preparatory result in which subspaces of a given Fr\'echet space are supplied with a new Fr\'echet topology that is stronger than the induced topology.

\begin{proposition}\label{prop:generalFrechet}
Let $X$\ be a Fr\'echet space, with topology induced by a finite or countably infinite separating set $\{p_n : n\in P\}$ of seminorms on $X$. Suppose that  $I$ is a finite or countably infinite index set, that, for each $i\in I$, $\{q_{i,j} : j\in J_i\}$ is a set of continuous seminorms on $X$ of arbitrary cardinality, and that $w_i: J_i\to\mathbb R_{\geq 0}$\ is a nonnegative weight on $J_i$. Let
\begin{equation*}
X_I=\{x\in X : \sup_{j\in J_i} w_i(j)q_{i,j}(x)<\infty\textup{ for all }i\in I\}.
\end{equation*}
Then $X_I$ is a linear subspace of $X$, and, for each $i\in I$, the map $q_i: X_I\to\R_{\geq 0}$, defined by
\begin{equation*}
q_i (x)=\sup_{j\in J_i} w_i(j)q_{i,j}(x)\quad(x\in X_I),
\end{equation*}
is a seminorm on $X_I$. Furthermore, $X_I$\ is a Fr\'echet space when supplied with the locally convex topology induced by the separating family $\{p_n : n\in P\}\cup \{q_i : i\in I\}$ of seminorms on $X_I$, this topology is independent of the choice of the family $\{p_n : n\in P\}$ inducing the original topology on $X$, and the inclusion map $X_I\subset X$ is continuous.
\end{proposition}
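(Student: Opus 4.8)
The plan is to verify the listed properties in turn, with completeness of $X_I$ the only part that requires real work.

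First I would dispose of the algebraic and ``size'' statements. For each $i\in I$, regard $q_i$, defined by the displayed formula, as a function on all of $X$ with values in $[0,\infty]$. Multiplying the seminorm inequalities $q_{i,j}(x+y)\le q_{i,j}(x)+q_{i,j}(y)$ and $q_{i,j}(\lambda x)=|\lambda|\,q_{i,j}(x)$ by $w_i(j)$ and taking suprema over $j\in J_i$ shows that $q_i$ is subadditive and positively homogeneous on $X$; hence $\{x\in X:q_i(x)<\infty\}$ is a linear subspace of $X$ on which $q_i$ is a genuine seminorm, and $X_I=\bigcap_{i\in I}\{x\in X:q_i(x)<\infty\}$ is a linear subspace of $X$ with each $q_i|_{X_I}$ a seminorm. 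The family $\{p_n:n\in P\}\cup\{q_i:i\in I\}$ is a union of two at most countable families, hence countable, and it separates the points of $X_I$ because $\{p_n\}$ already separates those of $X$; so it induces a metrizable locally convex topology on $X_I$. Since $\{p_n\}$ induces the topology of $X$, the restrictions $p_n|_{X_I}$ induce exactly the subspace topology on $X_I$, and as these are among the defining seminorms for $X_I$, the inclusion $X_I\subset X$ is continuous.

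Next I would prove completeness. Given a Cauchy sequence $(x_k)$ in $X_I$, metrizability makes it Cauchy for each $p_n$ and each $q_i$; being $p_n$-Cauchy for all $n$ it converges in the Fr\'echet space $X$ to some $x\in X$. Fix $i\in I$ and $\varepsilon>0$ and choose $N$ with $q_i(x_k-x_l)\le\varepsilon$ for $k,l\ge N$, so that $w_i(j)q_{i,j}(x_k-x_l)\le\varepsilon$ for every $j\in J_i$ and all $k,l\ge N$. This is the one delicate step: for fixed $j$ and fixed $k\ge N$, continuity of $q_{i,j}$ \emph{on $X$} lets me let $l\to\infty$ and obtain $w_i(j)q_{i,j}(x_k-x)=\lim_{l\to\infty}w_i(j)q_{i,j}(x_k-x_l)\le\varepsilon$; only then do I take the supremum over $j\in J_i$, getting $q_i(x_k-x)\le\varepsilon$ for all $k\ge N$. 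In particular $q_i(x_k-x)<\infty$, so $x=x_k-(x_k-x)\in\{q_i<\infty\}$, and since $i$ was arbitrary, $x\in X_I$. The same bound gives $q_i(x_k-x)\to0$ for every $i$, while $p_n(x_k-x)\to0$ for every $n$ because $x_k\to x$ in $X$; hence $x_k\to x$ in $X_I$, so $X_I$ is complete, hence a Fr\'echet space.

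Finally, for independence of the topology from the choice of $\{p_n\}$, the clean argument is that the topology induced on $X_I$ by $\{p_n\}\cup\{q_i\}$ is the join of the topology induced by the $p_n|_{X_I}$ — which is just the subspace topology inherited from $X$, hence intrinsic to $X$ — and the topologies induced by the individual $q_i$; none of these depends on the generating family chosen for $X$. (Equivalently, if $\{p'_m\}$ is another generating family for $X$, then on $X$ each $p_n$ is dominated by a constant times a finite maximum of the $p'_m$, and restricting this inequality to $X_I$ shows that the identity map between the two candidate topologies on $X_I$ is continuous in both directions.) I do not foresee a genuine obstacle here; the only point needing care is the interchange of limit and supremum in the completeness argument, which is exactly why the hypothesis requires the $q_{i,j}$ to be continuous on the ambient space $X$ and not merely on $X_I$.
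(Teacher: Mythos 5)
Your proposal is correct and follows essentially the same route as the paper: the algebraic and topological verifications are routine (the paper leaves them to the reader), and the heart of the matter is the completeness argument, where you, exactly as the paper does, pass from $q_i(x_k-x_l)\le\varepsilon$ to $w_i(j)q_{i,j}(x_k-x_l)\le\varepsilon$, use continuity of each $q_{i,j}$ on the ambient space $X$ to let $l\to\infty$ for fixed $j$, and only then take the supremum over $j\in J_i$. Your derivation of $x\in X_I$ via finiteness of $q_i(x_k-x)$ and the triangle inequality for the extended-valued $q_i$ is a harmless variant of the paper's use of boundedness of the Cauchy sequence, so there is nothing to add.
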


\begin{proof}
The routine verifications -- for which the continuity of the $q_{i,j}$ is not needed -- that $X_I$ is a linear subspace of $X$, that the $q_i$ $(i\in I)$ are seminorms on $X_I$, that the topology on $X_I$ does not depend on the choice of the family $\{p_n : n\in P\}$, and that the inclusion map is continuous are left to the reader.

To see that $X_I$ is Fr\'echet, we first note that it is metrizable, since the separating family $\{p_n : n\in P\}\cup \{q_i : i\in I\}$ is at most countably infinite.

As to the completeness of $X_I$, let $\seq{x_k}\subset X_I$ be a Cauchy sequence. Since the $p_n$ $(n\in P)$ are included in the family of seminorms defining the topology on $X_I$, $\seq{x_k}$ is also a Cauchy sequence in the complete space $X$. Let $x$ denote its limit. We must prove that $x\in X_I$ and that $x_k\to x$ in $X_I$.

To show that $x\in X_I$ note that, for each fixed $i\in I$, there exists $C_i\geq 0$ such that $q_i(x_k)\leq C_i$ for all $k$.  That is, $w_i(j)q_{i,j}(x_k)\leq C_i$, for all $j\in J_i$ and all $k$. Since the $q_{i,j}$ are continuous on $X$, this implies that $w_i(j)q_{i,j}(x)\leq C_i$, for all $j\in J_i$. We conclude that $x\in X_I$.

It remains to show that $x_k\to x$ in $X_I$, i.e., that $p_n(x-x_k)\to 0$ $(n\in P)$ and $q_i(x-x_k)\to 0$ $(i\in I)$. The first statement is simply the convergence of $x_k$ to $x$ in $X$, so we turn to the second. Fix $i\in I$ and let $\epsilon >0$. Then there exists $N\in\mathbb N$ such that $q_i(x_k-x_l)<\epsilon/2$, for all $k,l\geq N$, hence $w_i(j)q_{i,j}(x_k-x_l)<\epsilon/2$ for all $k,l\geq N$\ and all $j\in J_i$.  Since the $q_{i,j}$ are continuous on $X$, this implies that $w_i(j)q_{i,j}(x-x_k)\leq \epsilon/2$ for all $k\geq N$ and all $j\in J_i$. We conclude that $q_i(x-x_k)\leq\epsilon/2 <\epsilon$ for all $k\geq N$. Hence $q_i(x-x_k)\to 0$, as required.
\end{proof}

We can now define the subspace of rapidly approximable elements of a Hilbert space and show that in its natural topology it is topologically isomorphic to $(s)$. The choice to start the indexing of the orthonormal basis at 0 is made with the constant polynomials in mind.
\begin{theorem}\label{the:HilbertFrechet}
Let $H$ be a separable Hilbert space with orthonormal basis $\{e_n : n=0,1,2,\ldots\}$. For $n=0,1,2,\ldots$, let $L_n=\textup{Span}\{e_k :\ 0\leq k\leq n\}$. Put $L_{-1}=\{0\}$. Let
\begin{equation*}
\raspace=\{x\in H : \sup_{n\geq 0} n^kd(x,L_{n-1})<\infty \textup{ for all }k=0,1,2,\ldots\},
\end{equation*}
where $0^0$\ is to be read as $1$, and $d(x,L_{n-1})$\ is the distance from $x$ to the closed subspace $L_{n-1}$\ of $H$. For $k=0,1,2,\ldots$, define $q_k: \raspace\to\R_{\geq 0}$ by
\begin{equation*}
q_k(x)=\sup_{n\geq 0} n^kd(x,L_{n-1})\quad(x\in \raspace).
\end{equation*}
Then $\{q_k :\ k=0,1,2,\ldots\}$ is a separating family of seminorms on $\raspace$ that induces a Fr\'echet topology on $\raspace$. Moreover, $x=\sum_{n=0}^\infty a_n(x)e_n\in H$ is in $\raspace$ precisely when $\seq{a_n(x)}\in (s)$, and the map $\iso$ sending $x$ to $\seq{a_n(x)}$ is a topological isomorphism between the Fr\'echet spaces $\raspace$ and $(s)$.
\end{theorem}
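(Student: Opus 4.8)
The plan is to reduce everything to Proposition~\ref{prop:generalFrechet} together with the elementary formula for the distance to $L_{n-1}$ in terms of Fourier coefficients. Writing $x=\sum_{n=0}^\infty a_n(x) e_n$, the orthogonal projection of $x$ onto the closed subspace $L_{n-1}$ is $\sum_{m=0}^{n-1} a_m(x) e_m$, so
\begin{equation*}
d(x,L_{n-1})^2=\sum_{m\geq n}|a_m(x)|^2\qquad(n\geq 0);
\end{equation*}
in particular $d(x,L_{n-1})$ is nonincreasing in $n$, and $d(x,L_{-1})=\|x\|$, so that $q_0=\|\cdot\|$ on $\raspace$ (recall $0^0=1$). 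This identity is the engine behind all the estimates below.

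For the Fréchet statement I would invoke Proposition~\ref{prop:generalFrechet} with $X=H$ equipped with the single seminorm $p=\|\cdot\|$, with index set $I=\{0,1,2,\ldots\}$, with $J_i=\{0,1,2,\ldots\}$ for every $i\in I$, with the seminorms $q_{i,n}(x)=d(x,L_{n-1})$ — which are continuous on $H$, being $1$-Lipschitz — and with weights $w_i(n)=n^i$. Then the space $X_I$ of that proposition is precisely $\raspace$, and the seminorm denoted $q_i$ there is precisely the seminorm $q_i$ defined in the present statement. Hence $\raspace$ is a Fr\'echet space for the topology induced by $\{p\}\cup\{q_k : k\geq 0\}$, and the inclusion $\raspace\subset H$ is continuous. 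Since $q_0=p=\|\cdot\|$ already belongs to $\{q_k : k\geq 0\}$, this family by itself induces the same topology, and it is separating because $q_0$ is a norm.

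It remains to study $\iso$. It is injective since $\seq{e_n}$ is a basis. The crux is a two-sided comparison between $q_k(x)$ and the standard seminorms $r_k(\seq{a_n})=\sup_{n\geq 0}n^k|a_n|$ of $(s)$. On the one hand $|a_n(x)|^2\leq\sum_{m\geq n}|a_m(x)|^2=d(x,L_{n-1})^2$, so $n^k|a_n(x)|\leq q_k(x)$ for all $n,k$; thus $x\in\raspace$ forces $\seq{a_n(x)}\in(s)$, and moreover $r_k(\iso x)\leq q_k(x)$, so $\iso$ is continuous. On the other hand, if $\seq{a_n}\in(s)$ and $C_{k}=r_{k}(\seq{a_n})$, comparing $\sum_{m\geq n}m^{-2k-2}$ with $\int_{n-1}^\infty t^{-2k-2}\,dt$ (with a separate trivial estimate for the finitely many small $n$, and using $\sum_{m\geq 1}m^{-2}<\infty$ for the $n=0$ term $\|x\|$) produces a constant $D_k$, independent of the sequence, with $n^k d(x,L_{n-1})\leq D_k\max\{C_j : 0\leq j\leq k+1\}$ for all $n$, where $x=\sum a_n e_n\in H$ (well defined since $(s)\subset\ell^2$). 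This shows $x\in\raspace$, so $\iso$ is onto, and $q_k(x)\leq D_k\max\{r_j(\iso x): 0\leq j\leq k+1\}$, so $\iso^{-1}$ is continuous. Being a continuous linear bijection with continuous inverse, $\iso$ is a topological isomorphism of $\raspace$ onto $(s)$.

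I expect the only nontrivial step to be the second half of the comparison — the tail estimate yielding the bound on $q_k(x)$ by the $(s)$-seminorms of $\seq{a_n(x)}$, i.e.\ the continuity of $\iso^{-1}$ — but this amounts to a routine estimate of a convergent $p$-series; once the projection formula is in place, everything else is formal.
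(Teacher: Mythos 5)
Your proposal is correct, and up to the final step it follows the paper's own route: the Fr\'echet structure is obtained from Proposition~\ref{prop:generalFrechet} with exactly the same data (the continuous seminorms $x\mapsto d(x,L_{n-1})$ on $H$ with weights $n^k$, and the observation $\|\cdot\|\leq q_0$ to drop the extra norm), and membership of $\iso x$ in $(s)$, respectively surjectivity of $\iso$, is proved by the same coefficient bound $|a_n(x)|\leq d(x,L_{n-1})$ and the same tail-versus-integral comparison $\sum_{m\geq n}m^{-2k-2}\leq\int_{n-1}^{\infty}t^{-2k-2}\,dt$. Where you genuinely diverge is in proving that $\iso$ is a \emph{topological} isomorphism: you make the two estimates quantitative, obtaining $r_k(\iso x)\leq q_k(x)$ and $q_k(x)\leq D_k\max\{r_j(\iso x):0\leq j\leq k+1\}$, and conclude continuity of $\iso$ and $\iso^{-1}$ directly from these seminorm inequalities, whereas the paper deliberately takes the soft route: continuity of $\iso$ via the Closed Graph Theorem for F-spaces (using continuity of Fourier coefficients on $H$) and then the Open Mapping Theorem to upgrade the algebraic isomorphism to a topological one. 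The paper even remarks that ``one could resort to elementary means as above,'' which is precisely what you do. Your version buys explicit two-sided bounds between the families $\{q_k\}$ and the standard seminorms of $(s)$ (useful if one wants concrete constants, and independent of the completeness of the two spaces at that stage), at the cost of the small bookkeeping for $n=0,1$ and the factor $n^k/(n-1)^{k+1/2}$, which you rightly flag as routine; the paper's version is shorter at the final step and showcases the general principle that between F-spaces a continuous (or here, graph-closed) linear bijection is automatically a homeomorphism.
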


\begin{proof}
The topology on $H$ is defined by the norm $||\,.\,||$ and, for $n=-1,0,1,2,\ldots$, the map sending $x\to d(x,L_{n-1})$ is a continuous seminorm on $H$. Therefore Proposition~\ref{prop:generalFrechet} shows that $\raspace$ is a Fr\'echet space in the topology induced by the family $\{||\,.\,||\}\cup\{q_k : k=0,1,2,\ldots\}$ of seminorms on $\raspace$. However, since $||\,.\,||\leq q_0$, the family $\{q_k : k=0,1,2,\ldots\}$ induces the same topology.

It remains to establish the topological isomorphism between $\raspace$ and $(s)$.

First of all, if $x=\sum_{n=0}^\infty a_n(x)e_n\in H$ is in $\raspace$, and $k\in\mathbb N_0$\ is fixed, then there exists $C\geq 0$ such that $n^k\left(\sum_{i=n}^\infty |a_i(x)|^2\right)^{1/2} \leq C$ for all $n$. Then certainly $n^k|a_n(x)|\leq C$ for all $n$, showing that $\seq{a_n(x)}\in (s)$. Conversely, assume $x=\sum_{n=0}^\infty a_n(x)e_n\in H$ with $\seq{a_n(x)}\in (s)$. Fix $k\geq 0$. Then there exists a constant $C\geq 0$ such that $|a_n(x)|\leq C/n^{k+1}$ for all $n\geq 1$. Hence we have, for $n\geq 2$,
\begin{align*}
d(x,L_{n-1})&=\left(\sum_{i=n}^\infty |a_i(x)|^2\right)^{1/2}\\
&\leq \left(\sum_{i=n}^\infty \frac{C^2}{n^{2k+2}}\right)^{1/2}\\
&\leq \left(\int_{n-1}^\infty \frac{C^2}{x^{2k+2}}\,dx\right)^{1/2}\\
&=\frac{C}{\sqrt{2k+1}}\frac{1}{(n-1)^{k+1/2}}.
\end{align*}
This implies that $\sup_{n\geq 0} n^kd(x,L_{n-1})<\infty$, as required.

Thus $\iso :\raspace\to (s)$ is an isomorphism of abstract vector spaces. To see that it is also topological, one could resort to elementary means as above, but it also comes almost for free as a consequence of the completeness of the spaces.
To start with, the continuity of $\iso$ follows from the Closed Graph Theorem
for F-spaces \cite[Theorem~2.15]{Ru}. Indeed, suppose that $x_k\rightarrow
x$ in $\raspace$ and $\iso(x_k)\rightarrow \seq{a_n}$ in $(s)$. Then certainly $|| x_k-x||\leq q_0(x-x_k)\rightarrow 0$, hence the continuity of the Fourier coefficients
on $H$ implies that $\iso(x_k)$ converges to $\iso(x)$ in each coordinate.
But $\iso(x_k)$ also converges to $\seq{a_n}$ in each coordinate. Hence
$\iso(x)=\seq{a_n}$, and the Closed Graph Theorem then shows that $\iso$
is continuous. Since we already know that $\iso$ is an isomorphism of abstract
vector spaces, the Open Mapping Theorem for F-spaces \cite[Theorem~2.11]{Ru}
then implies that $\iso$ is a topological isomorphism.
\end{proof}

\section{Weighted spaces of smooth functions}\label{sec:weighted spacesofsmoothfunctions}

It is not obvious that the elements of the spaces $\araspace$ are actually smooth if $\alpha>1$, but this will follow from the Markov inequality \eqref{eq:Markov} and the completeness of the spaces in the following rather general result. The statement on convergence in the spaces will allow us to improve our understanding of the topology on $\araspace$.

In this section, we employ the usual notation for the differential operator $D^\alpha$ of order $|\alpha|$ corresponding to a multi-index $\alpha\in\N_0^d$.

\begin{theorem}\label{the:generalFrechet}
Let $\Omega$ be a nonempty open subset of $\R^d$ and $w:\Omega\to (0,\infty)$ a strictly positive measurable function on $\Omega$. For  $1\leq p<\infty$, define
\begin{equation*}
\wpsmooth =\{f\in C^{\infty}(\Omega):D^\alpha f\in L_{p}(\Omega, w\,dx) \textup{ for all } \alpha\in\N_0^d\},
\end{equation*}
and supply $\wpsmooth$ with the locally convex topology induced by the family of seminorms $\{\wpaseminorm : \alpha\in\N_0^d\}$, defined by
\begin{equation*}
\wpaseminorm(f)=\left\{\int_{\Omega}|D^\alpha f(x)|^{p}w(x)dx\right\}^{\frac{1}{p}}\quad(f\in\wpsmooth).
\end{equation*}

Suppose that there exists an open cover $\Omega=\bigcup_{i \in I}\Omega_{i}$, where $I$ is an arbitrary index set, such that, for each $i \in I$, $\Omega_i$, there exists $C_i>0$ such that $w(x)\geq C_i$ for all $x\in \Omega_i$. Then $\wpsmooth$ is a Fr\'echet space.

If $f_n\to f$ in $\wpsmooth$ and $\alpha\in\N_0^d$, then $D^\alpha f_n\to D^\alpha f$ uniformly on $\Omega_i$ for all $i\in I$, and consequently also on all compact subsets of $\Omega$.
\end{theorem}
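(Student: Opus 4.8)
The plan is to obtain both conclusions from a single local consequence of a Sobolev embedding theorem: on any subregion of $\Omega$ on which $w$ is bounded below, control of \emph{all} derivatives of $f$ in $L_p(\cdot,w\,dx)$ upgrades to uniform control of finitely many derivatives of $f$. First, the family $\{q_{w,p,\alpha}\}_{\alpha\in\N_0^d}$ is countable and separates points of $\wpsmooth$: if $q_{w,p,0}(f)=\|f\|_{L_p(\Omega,w\,dx)}=0$ then $f=0$ a.e., hence $f\equiv 0$ since $f$ is continuous and $w>0$; so the topology is metrizable. For the estimate, fix $i\in I$ and $m\in\N_0$, choose an integer $k>m+d/p$, and let $B$ be an open ball with $\overline B\subset\Omega_i$. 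The Sobolev embedding theorem gives $W^{k,p}(B)\hookrightarrow C^m(\overline B)$, say $\|g\|_{C^m(\overline B)}\le M\|g\|_{W^{k,p}(B)}$; combined with $w\ge C_i$ on $B\subset\Omega_i$ this yields, for every $f\in\wpsmooth$,
\[
\sum_{|\beta|\le m}\sup_{\overline B}|D^\beta f|\;\le\;M\Big(\sum_{|\gamma|\le k}\int_B|D^\gamma f|^p\,dx\Big)^{1/p}\;\le\;M\,C_i^{-1/p}\Big(\sum_{|\gamma|\le k}q_{w,p,\gamma}(f)^p\Big)^{1/p}.
\]
When the sets $\Omega_i$ are themselves of the regularity for which the Sobolev embedding is available (bounded with Lipschitz boundary, or more generally extension domains) one may take $B=\Omega_i$; in any case one may take $B$ to be any ball with $\overline B\subset\Omega_i$, and these balls cover $\Omega$. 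Thus the topology of $\wpsmooth$ is finer than that of uniform convergence of each $D^\alpha$ on each such $B$, hence, since every compact $K\subset\Omega$ lies in finitely many such balls, also finer than uniform convergence of each $D^\alpha$ on compact subsets of $\Omega$.

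Given this, the convergence statement is immediate: if $f_n\to f$ in $\wpsmooth$, apply the displayed inequality to $f_n-f$ to get $D^\alpha f_n\to D^\alpha f$ uniformly on each admissible $\Omega_i$ (and on every ball $\overline B\subset\Omega_i$), hence uniformly on every compact subset of $\Omega$. For completeness, let $(f_n)$ be a Cauchy sequence in $\wpsmooth$. For each $\alpha$ the sequence $(D^\alpha f_n)$ is Cauchy in the complete space $L_p(\Omega,w\,dx)$; call its limit $g_\alpha$. Applying the estimate to $f_n-f_m$ shows $(D^\alpha f_n)$ is also uniformly Cauchy on each ball $B$ with $\overline B\subset\Omega_i$, hence converges there uniformly to a continuous function; since $w\ge C_i>0$ on $B$, this uniform limit agrees a.e.\ on $B$ with $g_\alpha$, so $g_\alpha$ has a continuous representative, which on overlapping balls patches to a continuous function on $\Omega$, still written $g_\alpha$. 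On each ball $B$ we now have $f_n\to g_0$ and $D^\alpha f_n\to g_\alpha$ uniformly for all $\alpha$, so by the classical theorem on uniform limits of derivatives $g_0\in C^\infty(B)$ with $D^\alpha g_0=g_\alpha$; as the balls cover $\Omega$, $g_0\in C^\infty(\Omega)$ and $D^\alpha g_0=g_\alpha\in L_p(\Omega,w\,dx)$, so $g_0\in\wpsmooth$. Finally $q_{w,p,\alpha}(f_n-g_0)=\|D^\alpha f_n-g_\alpha\|_{L_p(\Omega,w\,dx)}\to 0$ for each $\alpha$, i.e.\ $f_n\to g_0$ in $\wpsmooth$. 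Hence $\wpsmooth$ is complete, and, being metrizable, Fr\'echet.

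The main obstacle is the local estimate, i.e.\ converting integral control against $w$ into uniform control of derivatives. This pins down the Sobolev exponent, $k>m+d/p$, and --- more delicately --- the region on which the embedding is invoked: open balls $B$ with $\overline B\subset\Omega_i$ are always available and suffice for everything said about compact subsets of $\Omega$, whereas getting uniform convergence \emph{on all of $\Omega_i$} genuinely requires the $\Omega_i$ to be, for instance, bounded Lipschitz (or extension) domains. The only other point that needs care is matching the $L_p(\Omega,w\,dx)$-limit $g_\alpha$, produced by completeness of that space, with the classical derivative $D^\alpha g_0$; this is supplied precisely by the uniform convergence of $(D^\alpha f_n)$ on balls, together with the standard theorem on termwise differentiation of uniformly convergent sequences.
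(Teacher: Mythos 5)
Your proof is correct and is essentially the paper's own argument: metrizability from the countable separating family of seminorms, a local Sobolev embedding on balls on which $w$ is bounded below (so that the $L_p(\Omega,w\,dx)$-seminorms dominate the unweighted Sobolev norms there) to upgrade to locally uniform control of all derivatives, patching of the local uniform limits to a smooth function, and identification of this function's derivatives with the $L_p(\Omega,w\,dx)$-limits $g_\alpha$ to conclude completeness. Your caveat about uniform convergence on all of $\Omega_i$ is well taken and matches what the paper actually does: its proof replaces the given cover by a cover of balls (which satisfy the cone condition required in Adams' embedding theorem, a hypothesis weaker than bounded Lipschitz or extension domain), so it too really establishes uniform convergence only on such balls and hence on compact subsets of $\Omega$, which is all that is used later in the paper.
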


Before we turn to the proof, let us note that such a cover always exists if $w$ is continuous and strictly positive: let $I=\Omega$ and take a small open ball around each $x\in\Omega$ that is contained in $\Omega$. Hence $\wpsmooth$ is a Fr\'echet space for such $w$, for all $1 \leq p<\infty$, and the convergence statement is valid.

As to the proof of Theorem~\ref{the:generalFrechet}, we first note that $\{\wpaseminorm :\alpha\in\N_0^d\}$ is trivially a separating family of seminorms on $\wpsmooth$. Indeed, since $w$ is strictly positive, $q_{w,p,0}$ is in fact a norm. Since the family is countable, \cite[Chapter~IV~Proposition~2.1]{Conway} shows that $\wpsmooth$ is metrizable. Hence it remains to show that $\wpsmooth$ is complete, and that the statement on convergence holds.

Our proof for this is based on a part of the Sobolev Imbedding Theorem as formulated in \cite{Adams}. We recall the relevant notions for the convenience of the reader. Let $m=0,1,2,\ldots$ be an integer and let $1\leq p\leq\infty$. If $\Omega$ is a nonempty possibly unbounded open subset of $\R^d$, the Sobolev space $W^{m,p}(\Omega)$ is defined as

\begin{equation*}
W^{m,p}(\Omega) = \{f\in L_{p}(\Omega, dx):D^{\alpha}f\in L_{p}(\Omega, dx)\;\textup{for}\;0\leq|\alpha|\leq m\},
\end{equation*}
with the usual identification of functions agreeing almost everywhere. Here $D^\alpha f$ is the weak (distributional) derivative of $f$. As is well known (see \cite[3.3]{Adams} for a proof), $W^{m,p}(\Omega)$ is a Banach space when equipped with the norm defined, for $f\in W^{m,p}(\Omega)$, by
\begin{equation*}
\|f\|_{m,p}=
\begin{cases}
\left(\sum_{0\leq|\alpha|\leq m}\|D^\alpha f\Vert_p^p\right)^{1/p} &\textup{if }1\leq p<\infty;\\
\max_{0\leq|\alpha|\leq m} \Vert D^\alpha f\Vert_\infty&\textup{if }p=\infty.
\end{cases}
\end{equation*}
The part of the Sobolev Imbedding Theorem we will need embeds these spaces $W^{m,p}(\Omega)$ continuously into spaces of functions with a certain minimal degree of regularity. If $\Omega$ is a nonempty open subset of $\R^d$, and $j=0,1,2,\ldots$, then $C_B^j(\Omega)$ is defined by
\begin{equation*}
C_{B}^{j}(\Omega)=\{u\in C^{j}(\Omega):D^{\alpha}u\;\textup{is\;bounded\;on}\;\Omega\;\textup{for all }\;0\leq|\alpha|\leq j\}.
\end{equation*}
Then (see \cite[1.27]{Adams}) $C_B^j(\Omega)$\ is a Banach space when supplied with the norm defined, for $f\in C_B^j(\Omega)$, by
\begin{equation*}
\| f\|_j=\max_{0\leq |\alpha|\leq j}\| D^\alpha f\|_\infty.
\end{equation*}
We also recall that $\Omega$ is said to satisfy the cone condition if there exists a finite cone $C$ such that each $x\in\Omega$ is the vertex of a finite cone $C_x$ that is contained in $\Omega$ and that is obtained from $C$ by a rigid motion.

The case of the Sobolev Imbedding Theorem we will use is then as follows \cite[Theorem~4.12.I.A]{Adams}.

\begin{theorem}\label{the:Sobolev}
Let $\Omega$ be a nonempty open subset of $\R^d$ satisfying the cone condition. Let $j\geq 0$ and $m\geq 1$ be integers and let $1\leq p<\infty$. If either $mp>n$ or $m=n$ and $p=1$, then
\begin{equation*}
W^{j+m,p}(\Omega)\subset C_{B}^{j}(\Omega),
\end{equation*}
and the inclusion map is continuous.
\end{theorem}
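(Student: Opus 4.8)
The plan is to reduce the statement to its core case $j=0$ and then to prove the embedding $W^{m,p}(\Omega)\hookrightarrow C_B^0(\Omega)$ by an integral representation adapted to the cone condition, following the classical method (as in \cite{Adams}). For the reduction, suppose we have already shown that, whenever $mp>d$ (or $m=d$ and $p=1$), every $f\in W^{m,p}(\Omega)$ has a bounded continuous representative with $\|f\|_\infty\leq C\|f\|_{m,p}$, the constant $C$ depending only on $\Omega,m,p,d$. Given $f\in W^{j+m,p}(\Omega)$ and a multi-index $\beta$ with $|\beta|\leq j$, the weak derivative $D^\beta f$ lies in $W^{m,p}(\Omega)$, since $D^\gamma D^\beta f=D^{\beta+\gamma}f\in L_p(\Omega,dx)$ for $|\gamma|\leq m$. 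Hence each $D^\beta f$ $(|\beta|\leq j)$ has a bounded continuous representative with $\|D^\beta f\|_\infty\leq C\|D^\beta f\|_{m,p}\leq C\|f\|_{j+m,p}$. Since a function all of whose weak derivatives up to order $j$ are continuous is genuinely of class $C^j$, we obtain $f\in C_B^j(\Omega)$ with $\|f\|_j=\max_{|\beta|\leq j}\|D^\beta f\|_\infty\leq C\|f\|_{j+m,p}$, which is exactly the asserted continuous inclusion.

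The heart of the matter is therefore the case $j=0$, where I would exploit the cone condition to obtain a Sobolev integral representation. By hypothesis there is a fixed finite cone $C$ such that every $x\in\Omega$ is the vertex of a congruent cone $C_x\subset\Omega$; the congruence is essential, as it makes all the constants below uniform in $x$. For $f\in C^\infty(\Omega)\cap W^{m,p}(\Omega)$ I would expand $f$ about $x$ by Taylor's formula with integral remainder along the rays of $C_x$ and then average over $C_x$ against a fixed smooth weight supported in $C_x$. This yields a decomposition $f(x)=(Q_mf)(x)+(R_mf)(x)$, in which $Q_mf$ is an averaged Taylor polynomial whose coefficients are integrals of the $D^\gamma f$ $(|\gamma|\leq m-1)$ over $C_x$, and the remainder satisfies a pointwise bound
\begin{equation*}
|(R_mf)(x)|\leq C\int_{C_x}|y-x|^{m-d}\sum_{|\alpha|=m}|D^\alpha f(y)|\,dy.
\end{equation*}

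I would then estimate both pieces by Hölder's inequality, uniformly in $x$. For the remainder, with $p'$ the conjugate exponent,
\begin{equation*}
|(R_mf)(x)|\leq C\left(\int_{C_x}|y-x|^{(m-d)p'}\,dy\right)^{1/p'}\Bigl(\,\sum_{|\alpha|=m}\|D^\alpha f\|_{L_p(C_x,dx)}^p\Bigr)^{1/p}.
\end{equation*}
The first factor is finite and, by congruence of the cones, bounded by a constant independent of $x$ exactly when $(m-d)p'>-d$, which is equivalent to $mp>d$; the borderline case $m=d,p=1$ is handled separately, the kernel $|y-x|^{m-d}$ then being identically $1$ and $p'=\infty$. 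An easier Hölder estimate, using only the uniformly bounded size of $C_x$, bounds the finitely many averages composing $Q_mf(x)$ by $C\|f\|_{m,p}$. Combining, $\sup_{x\in\Omega}|f(x)|\leq C\|f\|_{m,p}$ for every smooth $f\in W^{m,p}(\Omega)$. I would then remove the smoothness assumption and gain continuity at once: by the Meyers--Serrin theorem $C^\infty(\Omega)\cap W^{m,p}(\Omega)$ is dense in $W^{m,p}(\Omega)$, and applying the uniform bound to differences $f_k-f_l$ of an approximating sequence shows $(f_k)$ is uniformly Cauchy, hence converges uniformly to a bounded continuous function equal to $f$ almost everywhere with $\|f\|_\infty\leq C\|f\|_{m,p}$. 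This gives $W^{m,p}(\Omega)\hookrightarrow C_B^0(\Omega)$ and, with the first paragraph, completes the proof.

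The main obstacle is the construction and bookkeeping of the integral representation: one must arrange the averaging over $C_x$ so that the polynomial part $Q_mf$ is genuinely controlled by $L_p$-norms of derivatives of order $\leq m-1$ while the remainder $R_mf$ carries precisely the kernel $|y-x|^{m-d}$, and one must keep every constant independent of the vertex $x$ — which is where the uniformity built into the cone condition (all $C_x$ congruent to one fixed cone) is indispensable. Once the representation is in hand, the threshold $mp>d$ is forced transparently by the requirement that $|y-x|^{m-d}\in L_{p'}(C_x)$, and the degenerate exponent case $m=d,p=1$ falls out because the kernel is then bounded.
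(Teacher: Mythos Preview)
The paper does not prove this theorem at all: it is simply quoted as \cite[Theorem~4.12.I.A]{Adams} and then used as a black box in the proof of Theorem~\ref{the:generalFrechet}. Your sketch is correct and is essentially the classical argument one finds in that reference --- reduction to $j=0$, the cone-averaged Taylor representation producing a kernel $|y-x|^{m-d}$, the H\"older estimate giving the threshold $mp>d$ (with the borderline $m=d$, $p=1$ handled by the bounded kernel), and Meyers--Serrin density to pass from smooth functions to all of $W^{m,p}(\Omega)$. One minor point worth tightening in a full write-up is the sentence ``a function all of whose weak derivatives up to order $j$ are continuous is genuinely of class $C^j$'': this is true, but it is a lemma in its own right (continuity of $f$ and of its weak gradient forces $f\in C^1$ with classical gradient equal to the weak one, and one iterates), so you should either cite it or indicate the one-line mollification argument.
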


We can now finish the proof of Theorem~\ref{the:generalFrechet}.

\begin{proof}[Conclusion of the proof of Theorem~\ref{the:generalFrechet}]
We start by establishing the following claim: If $\seq{f_n}$ is a Cauchy sequence in $\wpsmooth$, then there exists a function $f\in C^{\infty}(\Omega)$ such that $D^\alpha f_n (x)\to D^\alpha f(x)$ uniformly on $\Omega_i$, for all $i\in I$ and all $\alpha\in\N_0^d$. To see this, first note that we may as well assume that all $\Omega_i$ in Theorem~\ref{the:generalFrechet} satisfy the cone condition. Indeed, for each $x\in\Omega$ one can choose an open Euclidean ball $B_x$ that is contained in one of the $\Omega_i$, and then $\Omega=\bigcup_{x\in\Omega}B_x$ is an open cover with all required properties. If $i\in I$, then, since $w>C_i$ on $\Omega_i$, the map $\textup{Res}_i$ given by restricting functions on $\Omega$ to $\Omega_i $ gives natural continuous maps $\textup{Res}_{i,m}: \wpsmooth\to W^{m,p}(\Omega_i)$, for all $m=0,1,2,\dots$. Since $\Omega_i$ satisfies the cone condition, Theorem~\ref{the:Sobolev} then implies that, by composition of continuous inclusions, restriction gives continuous maps $\textup{Res}_{i,j}:\wpsmooth\to C_B^{j}(\Omega_i)$, for all $j=0,1,2,\ldots$.
Thus, if $\seq{f_n}$ is a Cauchy sequence in $\wpsmooth$, then $\seq{\textup{Res}_{i,j}f_n}$ is a Cauchy sequence in $C_B^j(\Omega_i)$, for all $i\in I$ and all $j=0,1,2,\ldots$. Since these spaces are Banach spaces, it is then not difficult to see that, for each $i\in I$, there exists $\phi_i\in C^{\infty}(\Omega_{i})$ such that, for every $\alpha\in\N_0^d$, $\seq{D^\alpha \textup{Res}_i f_n}$ converges uniformly to $D^\alpha \phi_i$ on $\Omega_i$.
Since the $\phi_i$ must then clearly agree on intersections of the $\Omega_i$, they patch together to yield $f\in C^{\infty}(\Omega)$ as requested. This concludes the proof of the claim.

Now let $\seq{f_n}$ be a Cauchy sequence in $\wpsmooth$. As a consequence of the first part of the proof, there exists $f\in C^{\infty}(\Omega)$ such that $D^\alpha f_n (x)\to D^\alpha f(x)$ for all $x\in \Omega$ and all $\alpha\in\N_0^d$. Actually, $f$ is in $\wpsmooth$ and $\seq{f_n}\to f$ in $\wpsmooth$, so that $\wpsmooth$ is complete. To see this, note that $\seq{D^\alpha f_n}$ is a Cauchy sequence in $L_{p}(\Omega,w\,dx)$, for all $\alpha\in\N_0^d$. Hence for each $\alpha\in\N_0^d$ there exists $g_\alpha\in L_p(\Omega, w\,dx)$ such that $\seq{D^\alpha f_n}\to g_\alpha$, and there exists a subsequence $\seq{D^\alpha f_{n_k}}$ such that $\seq{D^\alpha f_{n_k}(x)}$ converges $w(x)\,dx$-almost everywhere (and hence Lebesgue almost everywhere, since $w$ is strictly positive) to $g_\alpha (x)$ as $k$ tends to infinity. But $\seq{D^\alpha f_{n_k}(x)}$ also converges to $D^\alpha f(x)$ for all $x\in \Omega$. Hence $g_\alpha=D^\alpha f$ almost everywhere, for all $\alpha\in\N_0^d$. We conclude that $f\in\wpsmooth$ and that $\seq{f_n}\to f$ in $\wpsmooth$. This concludes the proof of the completeness of $\wpsmooth$.

The convergence statement is already implicit in the first part of the proof. Indeed, as observed in that first part, restriction gives continuous maps  $\textup{Res}_{i,j}:\wpsmooth\to C_B^{j}(\Omega_i)$, for all $j=0,1,2,\ldots$. Applying these to a convergent sequence $f_n\to f$ gives the statement on uniform convergence of all $\seq{D^\alpha f_n}$ on all $\Omega_i$. The uniform convergence on all compact subsets of $\Omega$ is then also clear.
\end{proof}

\begin{remark}
We could also have introduced our space $\wpsmooth$ as
\begin{equation*}
W_{w,p}^\infty =\{f:\Omega\to\mathbb C \textup{ is measurable and } D^\alpha f\in L_{p}(\Omega, w\,dx) \textup{ for all } \alpha\in\N_0^d\},
\end{equation*}
with the usual identification of functions agreeing almost everywhere, and where $D^\alpha f$ is now the weak derivative of $f$. Indeed, an argument as in the above proof, combining local restrictions with the regularity statement in Theorem~\ref{the:Sobolev}, shows that elements of $W_{w,p}^\infty$ are necessarily smooth. Hence $\wpsmooth=W_{w,p}^\infty$ and, if one prefers, one can think of $\wpsmooth$ as a weighted Sobolev space of infinite order.
\end{remark}

\section{$\araspace$: regularity and topology}\label{sec:regularityandtopology}

We can now establish the regularity of elements of $\araspace$ and get a better grip on the topology of this space. Ultimately this is all based on the Markov inequality \eqref{eq:Markov}, that is used in the proof of the key Proposition~\ref{prop:keyprop}, and the Sobolev Embedding Theorem~\ref{the:Sobolev}, that is used in the proof of Theorem~\ref{the:generalFrechet}.

As a first preparatory result, we note the following special case of Theorem~\ref{the:generalFrechet}.

\begin{theorem}\label{the:aFrechet}
For $\alpha>0$, let
\begin{equation*}
\asmooth =\{f\in C^{\infty}(\R):f^{(j)}\in \altwo ,\, j=0,1,2,\cdots\}.
\end{equation*}
Supply $\asmooth$ with the locally convex topology induced by the family of seminorms $\{\ajseminorm : j=0,1,2,\ldots\}$, defined, for $j=0,1,2,\cdots$,  by
\begin{equation}\label{eq:ajseminormdef}
\ajseminorm(f)=\|f^{(j)}\|_\anorm\quad(f\in\asmooth).
\end{equation}
 Then $\asmooth$ is a Fr\'echet space. If $f_n\to f$ in $\araspace$, then $f_n^{(j)}\to f^{(j)}$ uniformly on  compact subsets of $\R$, for all $j\geq 0$.
 \end{theorem}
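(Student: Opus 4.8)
The plan is to read Theorem~\ref{the:aFrechet} off Theorem~\ref{the:generalFrechet} as the special case $d=1$, $p=2$, $\Omega=\R$, and $w(x)=\Wa^2(x)=\textup{exp}(-2|x|^\alpha)$. First I would record the translation of notation: for $d=1$ a multi-index in $\N_0^1$ is just a nonnegative integer $j$ and $D^\alpha f=f^{(j)}$, so the space $C_{w,2}^\infty(\R)$ of Theorem~\ref{the:generalFrechet} is, for this $w$, precisely $\{f\in C^\infty(\R):f^{(j)}\in\altwo\text{ for all }j\geq 0\}=\asmooth$, and the seminorm $q_{w,2,\alpha}$ of that theorem is precisely the seminorm $\ajseminorm$ defined in \eqref{eq:ajseminormdef}. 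Thus the two seminormed spaces literally coincide.

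Next I would check the one hypothesis of Theorem~\ref{the:generalFrechet}: that there is an open cover $\R=\bigcup_i\Omega_i$ on each member of which $w$ is bounded below by a positive constant. Since $w$ is continuous and strictly positive this is automatic, and concretely one may use the countable cover $\Omega_n=(-n,n)$, $n\geq 1$, on which $w(x)\geq\textup{exp}(-2n^\alpha)>0$. Note that no restriction on $\alpha$ beyond $\alpha>0$ is needed.

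Theorem~\ref{the:generalFrechet} then yields both assertions at once: $\asmooth$ is a Fr\'echet space, and if $f_n\to f$ in $\asmooth$ then $f_n^{(j)}\to f^{(j)}$ uniformly on each $\Omega_n$, hence --- since every compact subset of $\R$ is contained in some $(-n,n)$ --- uniformly on every compact subset of $\R$, for all $j\geq 0$. (The separating character of $\{\ajseminorm:j\geq 0\}$, with $q_{\alpha,0}$ in fact a norm because $\Wa^2>0$, and metrizability come along in the same package. The displayed convergence statement is to be read with convergence in $\asmooth$; once the continuous inclusion $\araspace\subset\asmooth$ of Theorem~\ref{the:mainresults} is available, the version phrased with $\araspace$ follows from it as well.)

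I do not anticipate a genuine obstacle: all of the analytic work --- in particular the appeal to the Sobolev Imbedding Theorem~\ref{the:Sobolev} --- has already been carried out in the proof of Theorem~\ref{the:generalFrechet}. The only points that need attention are the routine identification of the multi-index notation with the one-dimensional $f^{(j)}$ notation, and the one-line verification of the covering condition.
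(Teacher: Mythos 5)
Your proof is correct and is exactly the paper's route: the paper states Theorem~\ref{the:aFrechet} as the special case $d=1$, $p=2$, $\Omega=\R$, $w=\Wa^2$ of Theorem~\ref{the:generalFrechet}, with the covering hypothesis automatic because $\Wa^2$ is continuous and strictly positive (no condition beyond $\alpha>0$ is used). Your reading of the convergence statement as convergence in $\asmooth$, with the version for $\araspace$ deferred until the continuous inclusion of Theorem~\ref{the:regularityandtopology} is available, also matches the paper's intent.
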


The second preparatory result, based on the Markov inequality \eqref{eq:Markov}, is as follows.

\begin{proposition}\label{prop:keyprop}
Let $\alpha>1$\ and let $\afourier^{-1}:(s)\to\altwo$ be the map assigning to $\seq{a_n}\in (s)$ the corresponding element $\sum_{n=0}^\infty a_n P_{\alpha,n}$ of $\altwo$. Then the series actually converges in $\asmooth$, and consequently
$\sum_{n=0}^\infty a_n P_{\alpha,n}^{(j)}$ converges uniformly on compact
subsets of $\R$ to $\left(\sum_{n=0}^\infty a_n P_{\alpha,n}\right)^{(j)}$, for all $j=0,1,2,\ldots$.

Furthermore, the map $\afourier^{-1}:(s)\to\asmooth$ thus obtained is continuous.
\end{proposition}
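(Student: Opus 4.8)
The plan is to prove the two assertions of Proposition~\ref{prop:keyprop} in the following order: first that the partial sums $S_N=\sum_{n=0}^N a_n P_{\alpha,n}$ form a Cauchy sequence in the Fréchet space $\asmooth$ (so that, by completeness established in Theorem~\ref{the:aFrechet}, they converge there); then that the limit in $\asmooth$ coincides with $\afourier^{-1}(\seq{a_n})=\sum a_n P_{\alpha,n}$ (the $\altwo$-limit); then the termwise-differentiation corollary; and finally the continuity of $\afourier^{-1}:(s)\to\asmooth$. The engine for all of this is the Markov inequality \eqref{eq:Markov}, iterated $j$ times, which yields a constant $C_{\alpha,j}$ with $\|P^{(j)}\|_\altwo\leq C_{\alpha,j}\,n^{j(1-1/\alpha)}\|P\|_\altwo$ for every $P\in\Pi_n$; since $1-1/\alpha<1$ this is polynomial growth in $n$, which is exactly what a rapidly decreasing coefficient sequence can absorb.

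For the Cauchyness, fix $j\geq 0$ and apply the iterated Markov inequality to the polynomial $P_{\alpha,n}$ (which has degree $n$ and $\altwo$-norm $1$): this gives $\ajseminorm(P_{\alpha,n})=\|P_{\alpha,n}^{(j)}\|_\altwo\leq C_{\alpha,j}\,n^{j(1-1/\alpha)}$. Hence for $M<N$,
\[
\ajseminorm(S_N-S_M)\leq\sum_{n=M+1}^{N}|a_n|\,\ajseminorm(P_{\alpha,n})\leq C_{\alpha,j}\sum_{n=M+1}^{N}|a_n|\,n^{j(1-1/\alpha)}.
\]
Since $\seq{a_n}\in(s)$, the coefficients decay faster than any power of $n$, so the series $\sum_n |a_n|\,n^{j(1-1/\alpha)}$ converges; thus the tail on the right tends to $0$ as $M\to\infty$, for every $j$. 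This shows $\seq{S_N}$ is Cauchy in $\asmooth$, and completeness (Theorem~\ref{the:aFrechet}) gives a limit $f\in\asmooth$. To identify $f$ with $\afourier^{-1}(\seq{a_n})$: convergence in $\asmooth$ implies convergence of $\seq{S_N}$ in $\altwo$ (the $j=0$ seminorm is exactly the $\altwo$-norm), and in $\altwo$ the partial sums converge to $\sum a_n P_{\alpha,n}=\afourier^{-1}(\seq{a_n})$ by definition; uniqueness of limits in $\altwo$ finishes this. The statement about uniform convergence of $\sum a_n P_{\alpha,n}^{(j)}$ on compact sets is then immediate from the convergence statement in Theorem~\ref{the:aFrechet} (respectively Theorem~\ref{the:generalFrechet}), since $S_N\to f$ in $\asmooth$ forces $S_N^{(j)}\to f^{(j)}$ uniformly on compacta.

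For the continuity of $\afourier^{-1}:(s)\to\asmooth$, the cleanest route is again the Closed Graph Theorem for Fréchet spaces, exactly as in the proof of Theorem~\ref{the:HilbertFrechet}. Suppose $\seq{a^{(m)}}\to\seq{a}$ in $(s)$ and $\afourier^{-1}(\seq{a^{(m)}})\to g$ in $\asmooth$. Composing with the continuous inclusion $\asmooth\hookrightarrow\altwo$ (the $j=0$ seminorm), we get $\afourier^{-1}(\seq{a^{(m)}})\to g$ in $\altwo$; but $\afourier^{-1}:(s)\to\altwo$ is already known to be continuous (it is the inverse of the Fourier-coefficient map, and convergence in $(s)$ certainly implies $\ell^2$-convergence of the coefficients, hence $\altwo$-convergence of the sums), so $\afourier^{-1}(\seq{a^{(m)}})\to\afourier^{-1}(\seq{a})$ in $\altwo$. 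By uniqueness of limits in $\altwo$, $g=\afourier^{-1}(\seq{a})$, so the graph is closed and $\afourier^{-1}:(s)\to\asmooth$ is continuous. Alternatively one can argue directly: for fixed $j$, $\ajseminorm(\afourier^{-1}(\seq{a}))\leq C_{\alpha,j}\sum_n |a_n|\,n^{j(1-1/\alpha)}\leq C_{\alpha,j}\big(\sum_n n^{-2}\big)^{1/2}\big(\sup_n |a_n|\,n^{j(1-1/\alpha)+1}\big)$, and the last supremum is a continuous seminorm on $(s)$; this gives continuity of each $\ajseminorm\circ\afourier^{-1}$, hence of $\afourier^{-1}$ into $\asmooth$.

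The main obstacle is really just the bookkeeping in the first step: one must be careful that the iterated Markov inequality is applied to polynomials of the \emph{correct} degree ($P_{\alpha,n}\in\Pi_n$, so the factor is $n^{j(1-1/\alpha)}$, not something larger), and that the resulting series $\sum_n |a_n| n^{j(1-1/\alpha)}$ genuinely converges for \emph{every} fixed $j$ — which it does precisely because $\seq{a_n}\in(s)$ beats every polynomial, regardless of how the exponent $j(1-1/\alpha)$ grows with $j$. No delicate analysis beyond Theorem~\ref{the:aMarkov} and the completeness furnished by Theorem~\ref{the:aFrechet} is needed; the force of the argument is entirely contained in those two cited results.
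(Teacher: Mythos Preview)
Your argument is essentially the paper's: iterate the Markov inequality to get $\ajseminorm(P_{\alpha,n})\le C_{\alpha,j}\,n^{j(1-1/\alpha)}$, use rapid decrease of $\seq{a_n}$ to make $\sum_n|a_n|\,n^{j(1-1/\alpha)}$ converge so the partial sums are Cauchy in $\asmooth$, and then invoke completeness from Theorem~\ref{the:aFrechet}. For continuity your Closed Graph route is valid (the paper gives only the direct seminorm estimate); note, however, that your alternative inequality $\sum_n|a_n|\,n^{\beta}\le\big(\sum_n n^{-2}\big)^{1/2}\sup_n|a_n|\,n^{\beta+1}$ is false as written---replace the exponent $\beta+1$ by $\beta+2$ and drop the square root, which is precisely what the paper does.
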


\begin{proof}
We start by showing that the series  $\sum_{n=0}^\infty a_n P_{\alpha,n}$ converges in $\asmooth$ for all $\seq{a_n}\in(s)$.  Since we know from Theorem~\ref{the:aFrechet} that $\asmooth$ is complete, this will follow once we know that $\left(\sum_{n=0}^{N} a_n P_{\alpha,n}\right)_{N=0}^\infty$ is a Cauchy sequence in $\asmooth$. For this it is clearly sufficient to show that $\sum_{n=0}^\infty \ajseminorm(a_n P_{\alpha,n})<\infty$ for $j=0,1,2,\ldots$, where the $\ajseminorm$ are the seminorms in \eqref{eq:ajseminormdef}. As to this, we note that it follows by iterating the Markov inequality \eqref{eq:Markov} for $p=2$ that, for $n,j=0,1,2,\ldots$,
\begin{align*}
\|P^{(j)}_{\alpha,n}\|_\anorm & \leq C_{\alpha,2}^{j}\left\{n(n-1)\cdots(n-j+1)\right\}^{1-\frac{1}{\alpha}}\|P_{\alpha,n}\|_\anorm\notag
\\ &\leq C_{\alpha,2}^{j}n^{j\left(1-\frac{1}{\alpha}\right)}\|P_{\alpha,n}\|_\anorm\notag
\\ &= C_{\alpha,2}^{j}n^{j\left(1-\frac{1}{\alpha}\right)}.
\end{align*}
Thus, for $j=0,1,2,\ldots$,
\begin{equation}\label{eq:finite}
\sum_{n=0}^\infty \ajseminorm(a_n P_{\alpha, n})=\sum_{n=0}^\infty |a_n|\|P_{\alpha,n}^{(j)}\|_\anorm \leq C_{\alpha,2}^j\sum_{n=0}^\infty |a_n|n^{j\left(1-\frac{1}{\alpha}\right)}.
\end{equation}
Since $\seq{a_n}\in(s)$, the right hand side in \eqref{eq:finite} is finite, as required, and this concludes the proof of the claim. The statement on uniform convergence then follows from Theorem~\ref{the:aFrechet}.

For the continuity of $\afourier^{-1}:(s)\to\asmooth$, fix $j\geq 0$. Choose an integer $k\geq j\left(1-\frac{1}{\alpha}\right)$. Then for arbitrary $\seq{a_n}\in (s)$ we have, using \eqref{eq:finite},
\begin{align*}
\ajseminorm\left(\afourier^{-1}(\seq{a_n}) \right)&=\ajseminorm\left(\sum_{n=0}^\infty a_n P_{\alpha,n}\right)\\
&\leq \sum_{n=0}^\infty\ajseminorm (a_n P_{\alpha,n})\\
&\leq C_{\alpha,2}^j\sum_{n=0}^\infty |a_n|n^{j\left(1-\frac{1}{\alpha}\right)}\\
&\leq C_{\alpha,2}^j\sum_{n=0}^\infty |a_n|n^k\\
&\leq C_{\alpha,2}^j |a_0| + C_{\alpha,2}\sum_{n=1}^\infty |a_n| n^{(k+2)-2}\\
&\leq C_{\alpha,2}^j\sup_n |a_n| + C_{\alpha,2}^j\left(\sum_{n=1}^\infty \frac{1}{n^2}\right)\sup_n n^{k+2}|a_n |.
\end{align*}
Since $\seq{a_n}\to\sup_n|a_n|$ and $\seq{a_n}\to\sup_n n^{k+2}|a_n|$ are elements of the family of seminorms defining the topology on $(s)$, we conclude (see, e.g., \cite[Proposition~1.2.8]{KaRi}) that $\afourier^{-1}$ is continuous.
\end{proof}

It is now a simple matter to combine this. Since $\aiso:\araspace\to (s)$ is continuous by Theorem~\ref{the:HilbertFrechet}, and $\aiso^{-1}:(s)\to \asmooth$ is continuous by Proposition~\ref{prop:keyprop}, we see that $\aiso^{-1}\circ\aiso: \araspace\to\asmooth$ is continuous. But this composition is the identity on $\araspace$. Hence we have the following.

\begin{theorem}\label{the:regularityandtopology}
Let $\alpha>1$. Then $\araspace\subset\asmooth$, and the inclusion map is continuous. If $f_n\to f$ in $\araspace$, then $f_n^{(j)}\to f^{(j)}$ uniformly on  compact subsets of $\R$, for all $j\geq 0$.
\end{theorem}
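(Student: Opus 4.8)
The plan is to assemble the two continuity statements that have already been established in the preceding two sections. Applying Theorem~\ref{the:HilbertFrechet} to the separable Hilbert space $H=\altwo$ with the orthonormal polynomial basis $\seq{P_{\alpha,n}}$, one gets that $\araspace$, equipped with the Fr\'echet topology induced by the approximation seminorms $q_k$, is topologically isomorphic to the space $(s)$ of rapidly decreasing sequences, the isomorphism $\aiso$ sending $f$ to its sequence $\seq{a_{\alpha,n}(f)}$ of Fourier coefficients; in particular $\aiso:\araspace\to(s)$ is continuous. On the other hand, Proposition~\ref{prop:keyprop} supplies a continuous map $\afourier^{-1}:(s)\to\asmooth$ sending $\seq{a_n}$ to $\sum_{n=0}^\infty a_n P_{\alpha,n}$, the series converging in $\asmooth$. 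Note that $\afourier^{-1}$ and $\aiso^{-1}$ are literally the same rule on $(s)$, differing only in their declared codomain ($\asmooth$ versus $\araspace$).

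Next I would identify the composite $\afourier^{-1}\circ\aiso:\araspace\to\asmooth$ with the inclusion map. For $f\in\araspace$ we have $\aiso(f)=\seq{a_{\alpha,n}(f)}$, and $\afourier^{-1}(\aiso(f))$ is the sum $\sum_{n=0}^\infty a_{\alpha,n}(f)P_{\alpha,n}$ taken in $\asmooth$. The one point that needs care --- and the only thing resembling an obstacle --- is verifying that this element of $\asmooth$ is genuinely $f$ itself, rather than merely some smooth function with the same Fourier coefficients. This is where the continuity of the inclusion $\asmooth\subset\altwo$ enters: the seminorm $q_{\alpha,0}$ is exactly the $\altwo$-norm, so the partial sums, converging in $\asmooth$, converge a fortiori in $\altwo$; but in $\altwo$ the partial sums of the Fourier series of $f$ converge to $f$ by elementary Hilbert space theory. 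Hence $f\in\asmooth$ and $\afourier^{-1}\circ\aiso$ is the identity on $\araspace$, viewed as a map into $\asmooth$. This also makes transparent why the inclusion is "non-trivial": a priori it is not even clear that an element of $\araspace$ is smooth, and the smoothness is exactly the content smuggled in through Proposition~\ref{prop:keyprop} (which rests on the Markov inequality \eqref{eq:Markov}) and Theorem~\ref{the:generalFrechet} (which rests on the Sobolev embedding).

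Finally, being a composition of continuous maps, $\afourier^{-1}\circ\aiso:\araspace\to\asmooth$ is continuous; since it equals the inclusion, we conclude $\araspace\subset\asmooth$ with continuous inclusion. The convergence statement is then immediate: if $f_n\to f$ in $\araspace$, continuity of the inclusion gives $f_n\to f$ in $\asmooth$, and Theorem~\ref{the:aFrechet} (the relevant special case of Theorem~\ref{the:generalFrechet}) yields $f_n^{(j)}\to f^{(j)}$ uniformly on compact subsets of $\R$ for every $j\geq 0$. Beyond the bookkeeping above I do not anticipate any genuine difficulty here; all the analytic work has already been spent in the earlier sections.
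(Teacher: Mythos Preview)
Your proposal is correct and follows essentially the same route as the paper: compose the continuous isomorphism $\aiso:\araspace\to(s)$ from Theorem~\ref{the:HilbertFrechet} with the continuous map $\afourier^{-1}:(s)\to\asmooth$ from Proposition~\ref{prop:keyprop}, identify the composite with the inclusion, and read off the uniform-convergence statement from Theorem~\ref{the:aFrechet}. Your explicit justification that the $\asmooth$-limit of the partial sums really is $f$ (via $q_{\alpha,0}=\|\cdot\|_{\altwo}$ and uniqueness of limits in $\altwo$) is a detail the paper leaves implicit, but otherwise the arguments coincide.
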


\section{$\twoltwo$: topological isomorphism with the Schwartz space}\label{sec:alphaistwo}

Although for general $\alpha>1$ Theorem~\ref{the:regularityandtopology} establishes some non-trivial basic facts for $\araspace$, a more concrete description of this space would be desirable, even if only as a set. At present this seems out of reach, but there is an exception if $\alpha=2$. In that case, $\tworaspace$ is topologically isomorphic with the Schwartz space $\S(\R)$ of rapidly decreasing functions via a multiplication map, cf.\ Theorem~\ref{the:alphaistwo}. The idea is to combine the topological isomorphism of $\tworaspace$ with $(s)$ and a known topological isomorphism between $(s)$ and the $\S(\R)$.

The latter topological isomorphism between $(s)$ and $\S(\R)$ involves Hermite functions, whose definitions we now recall.
For $n\geq 0$ and $x\in\R$, let, as in \cite[p.~142]{ReeSim}, or \cite[(1.1.2) and (1.1.18)]{Tha},
\begin{equation}\label{eq:hermitefunctionsdef}
h_n(x)=(-1)^n\left( 2^n n! \sqrt{\pi} \right)^{-\frac{1}{2}}e^{\frac{1}{2}x^2}\frac{d^n}{dx^n}e^{-x^2}
\end{equation}
Then the Hermite functions $\seq{h_n}$ form an orthonormal basis of $L^2(\R,dx)$. This is stated as \cite[Lemma~V.3]{ReeSim} with a reference to the exercises for the proof. Alternatively, \cite[22.2.14]{AbrSte} or \cite[(5.5.1)]{Sze} gives orthonormality, and \cite[(5.7.2)]{Sze} gives completeness; the latter also follows from \cite[Corollary~6.34]{Jeu}.

If $f\in  \ltwo$, then its Fourier coefficients with respect to this orthonormal basis are given, for $n\geq 0$, by
\begin{align*}\label{eq:transformfunctionsHermite}
\fourierhermite(f)_n&=\int_\R f(x)h_n(x)\,dx.
\end{align*}
If $f\in\S(\R)$, then the sequence $\seq{\fourierhermite(f)_n}$ is not just in $\ell_2$, but in fact in $(s)$. Actually, by \cite[Theorem~V.13]{ReeSim} and its proof, see also \cite[p.~262]{Schw}, $\fourierhermite:\S(\R)\to (s)$ is an isomorphism of topological vector spaces between $\S(\R)$ and $(s)$, and if $f\in\S(\R)$, then the series $\sum_{n=0}^\infty \fourierhermite(f)_nh_n$ converges to $f$ in the topology of $\S(\R)$.

We will now relate this to our setup, as follows.
If we define
$$
\psi(x)=e^{-x^2/2}\quad(x\in\R),
$$
then it is clear from \eqref{eq:hermitefunctionsdef} that
\begin{equation}\label{eq:hermitepolypsi}
h_n=Q_n\psi \quad(n\geq 0)
\end{equation} for some polynomial $Q_n$ of degree $n$. Hence $\seq{Q_n}$ is the system of orthonormal polynomials for the weight $\psi^2$
on $\R$. They are essentially the $P_{2,n}$, up to a constant and a dilation. Establishing notation to make this precise, if $f:\R\to\C$ is a function, and $r>0$, we let
\begin{equation*}
\delta_r f(x)=f(rx) \quad (x\in\R)
\end{equation*}
be the corresponding dilation of $f$. Now, for $n,m\geq 0$, a change of the variable of integration in the second step gives
\begin{align*}
\delta_{n,m}&=\int_\R P_{2,n}(x)P_{2,m}(x)e^{-2x^2}\,dx\\
&=\frac{1}{\sqrt 2}\int_\R P_{2,n}(s/\sqrt 2)P_{2,n}(2/\sqrt 2)e^{-s^2}\,ds\\
&=\int_\R \left(\frac{\delta_{1/\sqrt 2}P_{2,n}}{2^{1/4}}\right)(s) \left(\frac{\delta_{1/\sqrt 2}P_{2,m}}{2^{1/4}}\right)(s)\psi^2(s)\,ds.
\end{align*}
Since dilation preserves the degree of a polynomial, we conclude that
\begin{equation}\label{eq:polyrelation}
Q_n=\frac{\delta_{1/\sqrt 2}P_{2,n}}{2^{1/4}}\quad(n\geq 0).
\end{equation}
If $f\in\twoltwo$, then $(\delta_{1/\sqrt 2}f)\psi\in \ltwo$. For such $f$ we compute, for $n\geq 0$, using \eqref{eq:polyrelation} and \eqref{eq:hermitepolypsi},
\begin{align}\label{eq:relatingcoefficients}
(f,P_{2,n})_{\twoltwo}&=\int_\R f(x)P_{2,n}e^{-2x^2}\,dx\notag \\
&=\frac{1}{\sqrt 2}\int_\R f(s/\sqrt 2) P_{2,n}(s/\sqrt 2)e^{-s^2}\,ds\notag \\
&=\int_\R \left(\frac{\delta_{1/\sqrt 2}f}{2^{1/4}}\psi\right)(s)\left(\frac{\delta_{1/\sqrt 2}P_{2,n}}{2^{1/4}}\psi \right)(s)\,ds\notag \\
&=\int_\R \left(\frac{\delta_{1/\sqrt 2}f}{2^{1/4}}\psi\right)(s)\left(Q_n\psi \right)(s)\,ds\notag \\
&=\fourierhermite\left(\frac{\delta_{1/\sqrt 2}f}{2^{1/4}}\psi\right)_n.
\end{align}
When combining the topological isomorphism $\aiso:\tworaspace\to(s)$ and $\fourierhermite^{-1}:(s)\to \S(\R)$, we obtain a topological isomorphism $\aiso\circ\fourierhermite^{-1}: \araspace\to\S(\R)$. Concretely, if $f\in\tworaspace$, then, using \eqref{eq:relatingcoefficients},
\begin{align*}
 \aiso\circ\fourierhermite^{-1}(f)&=\sum_{n=0}^\infty (f,P_{2,n})_{\twoltwo} h_n\notag\\
 &=\sum_{n=0}^\infty\fourierhermite\left(\frac{\delta_{1/\sqrt 2}f}{2^{1/4}}\psi\right)_n h_n\\
 &=\frac{\delta_{1/\sqrt 2}f}{2^{1/4}}\psi.
\end{align*}
We conclude that the map
$$
f\mapsto \frac{\delta_{1/\sqrt 2}f}{2^{1/4}}\psi
$$
is a topological isomorphism between $\tworaspace$ and $\S(\R)$. Since dilations are automorphisms of $\S(\R)$, the same is then true for $f\mapsto\delta_{\sqrt 2}((\delta_{1/\sqrt 2}f) \psi)=f\psi^2$. All in all, we have the following description of $\tworaspace$.

\begin{theorem}\label{the:alphaistwo}
The elements $f$ of $\tworaspace$ are precisely all functions of the form
\begin{equation}\label{eq:isomorphism}
f(x)=g(x)e^{x^2}\quad(x\in\R),
\end{equation}
where $g\in\S(\R)$. That is, if $f\in \twoltwo$, then the following are equivalent:
\begin{enumerate}
\item $\sup_n n^k d(f,\Pi_{n-1})_{\twoltwo}<\infty$, for all $k=0,1,2,\ldots$.
\item There exists a Schwartz function $g\in\S(\R)$ as in \eqref{eq:isomorphism}.
\end{enumerate}

Moreover, the bijection in \eqref{eq:isomorphism} between $\tworaspace$ and $\S(\R)$ is a topological isomorphism.
\end{theorem}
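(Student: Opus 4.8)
The plan is to exhibit the bijection \eqref{eq:isomorphism} as a composition of topological isomorphisms, using the space $(s)$ of rapidly decreasing sequences as an intermediate object; all the ingredients are in fact already in place above, so the work is one of assembly together with careful bookkeeping of dilation and normalization constants. First I would apply Theorem~\ref{the:HilbertFrechet} to the separable Hilbert space $H=\twoltwo$ with the orthonormal basis $\seq{P_{2,n}}$. Since $\deg P_{2,n}=n$, the subspaces $L_{n-1}$ appearing in that theorem are exactly the $\Pi_{n-1}$, so $\aiso\colon f\mapsto((f,P_{2,n})_{\twoltwo})_{n\geq 0}$ is a topological isomorphism of $\tworaspace$ onto $(s)$; in particular condition~(1) is just the assertion $\aiso(f)\in(s)$.

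Next I would bring in the classical isomorphism recalled above, namely that the Hermite transform $\fourierhermite\colon\S(\R)\to(s)$ is a topological isomorphism with $\sum_n\fourierhermite(g)_n h_n=g$ in $\S(\R)$. The crux is to identify $\aiso$, up to a fixed linear change of variable, with $\fourierhermite$, and this is precisely what the computation \eqref{eq:relatingcoefficients} accomplishes: the single substitution $x=s/\sqrt2$, combined with \eqref{eq:polyrelation} and \eqref{eq:hermitepolypsi}, yields $(f,P_{2,n})_{\twoltwo}=\fourierhermite\bigl(2^{-1/4}(\delta_{1/\sqrt2}f)\psi\bigr)_n$ for every $f\in\twoltwo$. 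Writing $Mf=2^{-1/4}(\delta_{1/\sqrt2}f)\psi$, one checks (as already noted before \eqref{eq:relatingcoefficients}) that $M$ is an isometric linear bijection of $\twoltwo$ onto $\ltwo$, and the identity just obtained says exactly $\aiso=\fourierhermite\circ M$. Since $\aiso$ and $\fourierhermite$ restrict to topological isomorphisms onto $(s)$, I would conclude that $M$ restricts to a topological isomorphism of $\tworaspace$ onto $\S(\R)$, so that $f\in\tworaspace$ if and only if $Mf\in\S(\R)$.

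Finally I would unwind $M$ through automorphisms of $\S(\R)$. Dilations and nonzero scalar multiples are topological automorphisms of $\S(\R)$, so post-composing $M$ with $g\mapsto 2^{1/4}\,\delta_{\sqrt2}g$ produces a topological isomorphism $f\mapsto\delta_{\sqrt2}\bigl((\delta_{1/\sqrt2}f)\psi\bigr)$ of $\tworaspace$ onto $\S(\R)$, and a pointwise evaluation gives $\delta_{\sqrt2}\bigl((\delta_{1/\sqrt2}f)\psi\bigr)(x)=f(x)\psi(\sqrt2 x)=f(x)e^{-x^2}$. Inverting, $g\mapsto g(x)e^{x^2}$ is then a topological isomorphism of $\S(\R)$ onto $\tworaspace$, which is the bijection \eqref{eq:isomorphism}; this simultaneously gives the equivalence of (1) and (2) and the final topological-isomorphism claim. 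I expect no genuine obstacle once Theorem~\ref{the:HilbertFrechet} and the Hermite-transform isomorphism are granted; the only point that needs care is the constant-chasing in the middle step — arranging the $\sqrt2$-dilation and the $2^{1/4}$ factor so that $M$ really carries $P_{2,n}$ to $h_n$, equivalently so that $\aiso=\fourierhermite\circ M$ holds exactly — and then checking in the last step that the surviving multiplier is precisely the Gaussian $e^{x^2}$ and not some other rescaling of it.
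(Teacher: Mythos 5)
Your proposal is correct and follows essentially the same route as the paper: Theorem~\ref{the:HilbertFrechet} applied to $\twoltwo$ with the basis $\seq{P_{2,n}}$, the Hermite-transform isomorphism $\fourierhermite\colon\S(\R)\to(s)$, the coefficient identity \eqref{eq:relatingcoefficients} to splice the two together, and a final dilation/scalar adjustment producing the multiplier $e^{-x^2}$, whose inverse is \eqref{eq:isomorphism}. Your packaging of the middle step as the factorization $\aiso=\fourierhermite\circ M$ with $M$ an isometry of $\twoltwo$ onto $\ltwo$ is a slightly cleaner bookkeeping of the same computation, and all the constants check out.
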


\section{Possibilities}\label{sec:questions}

Although Theorem~\ref{the:alphaistwo} gives a complete answer for $\alpha=2$, the results for general $\alpha>1,\,\alpha\neq 2$ are still not complete, with Theorem~\ref{the:mainresults} and Remark~\ref{rem:ditzianpaper} containing what appears to be known at this moment.

It is tempting to try to extrapolate Theorem~\ref{the:alphaistwo}, and suggest the possibility that $\araspace$ consists of (or at least contains) the functions that are equal to $g W_\alpha^{-1}$ on $(R,\infty)$ for some $g\in\S(\R)$ and $R>0$, and likewise at $-\infty$. Certainly \eqref{eq:bounded} shows that there exist such $g$ with $g$ bounded, but this is still very far from $g$ being rapidly decreasing, and we refrain from stating a conjecture.

Likewise, at the moment we have no evidence, also not for $\alpha=2$, that $\asmooth$ may or not be equal to $\araspace$. Still it is interesting to investigate what would follow if this were actually the case. This is done in our final result. It shows that the validity of a stronger form of the Jackson-type inequality \eqref{eq:aJackson} would not only imply that $\asmooth=\araspace$ as sets, but is actually equivalent with the equality of these sets.

\begin{proposition}\label{prop:equivalences}
Let $\alpha>1$. Then the following are equivalent:
\begin{enumerate}
 \item $\asmooth=\araspace$ as sets;
 \item $\asmooth=\araspace$ as topological vector spaces;
 \item For each $f\in \asmooth$ and each $k=0,1,2,\ldots$, there exists a constant $C$ such that, for all $n=0,1,2,\ldots$,
 \begin{equation}\label{eq:aactualJackonpointwise}
 n^k d(f,\Pi_n)_{\altwo}\leq C.
 \end{equation}
 \item For each $k=0,1,2,\ldots$, there exist a constant $C$, an integer $r>0$, and integers $0\leq j_1<j_2<\cdots<j_r$, with the property that, for all $f\in\asmooth$ and all $n=0,1,2,\ldots$,
 \begin{equation}\label{eq:aactualJacksonuniform}
 n^k d(f,\Pi_n)_{\altwo}\leq C \sum_{i=1}^r\| f^{(j_i)}\|_{\altwo}.
\end{equation}
\end{enumerate}
\end{proposition}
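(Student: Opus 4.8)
The plan is to establish the chain of implications $(4)\Rightarrow(3)\Rightarrow(2)\Rightarrow(1)\Rightarrow(4)$, using the Fréchet-space machinery from Sections~\ref{sec:rapidapproximation}--\ref{sec:regularityandtopology} to do most of the heavy lifting.

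\textbf{$(4)\Rightarrow(3)$.} This is trivial: given $f\in\asmooth$ and $k$, apply \eqref{eq:aactualJacksonuniform} and set $C'=C\sum_{i=1}^r\|f^{(j_i)}\|_{\altwo}$, which is finite precisely because $f\in\asmooth$.

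\textbf{$(3)\Rightarrow(2)$.} Condition $(3)$ says exactly that every $f\in\asmooth$ satisfies $\sup_n n^k d(f,\Pi_{n-1})_{\altwo}<\infty$ for all $k$ (shifting the index from $\Pi_n$ to $\Pi_{n-1}$ only helps, since $\Pi_{n-1}\subset\Pi_n$), i.e., $\asmooth\subset\araspace$ as sets. Combined with the reverse inclusion $\araspace\subset\asmooth$ from Theorem~\ref{the:regularityandtopology}, we get equality as sets. To upgrade to a topological equality, note that by Theorem~\ref{the:aFrechet} $\asmooth$ is a Fréchet space, by Theorem~\ref{the:HilbertFrechet} (applied via Proposition~\ref{prop:generalFrechet}) $\araspace$ is a Fréchet space, and by Theorem~\ref{the:regularityandtopology} the identity map $\araspace\to\asmooth$ is continuous. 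A continuous linear bijection between Fréchet spaces is a topological isomorphism by the Open Mapping Theorem \cite[Theorem~2.11]{Ru}, so $(2)$ follows.

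\textbf{$(2)\Rightarrow(1)$.} Immediate.

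\textbf{$(1)\Rightarrow(4)$.} This is the substantive direction. Assume $\asmooth=\araspace$ as sets; since both carry Fréchet topologies and the identity $\araspace\to\asmooth$ is continuous, the Open Mapping Theorem makes the identity $\asmooth\to\araspace$ continuous as well. Continuity of a linear map between spaces with countable families of seminorms means: for each seminorm $q_k$ of $\araspace$ there are finitely many seminorms $\ajseminorm[j_1],\ldots,\ajseminorm[j_r]$ of $\asmooth$ and a constant $C$ with $q_k(f)\leq C\sum_{i=1}^r\ajseminorm[j_i](f)=C\sum_{i=1}^r\|f^{(j_i)}\|_{\altwo}$ for all $f$ (see, e.g., \cite[Proposition~1.2.8]{KaRi}). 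Since $q_k(f)=\sup_{n\geq 0}n^k d(f,\Pi_{n-1})_{\altwo}\geq n^k d(f,\Pi_n)_{\altwo}$ for every $n$ (again using $\Pi_{n-1}\subset\Pi_n$), this yields exactly \eqref{eq:aactualJacksonuniform}, establishing $(4)$.

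\textbf{Main obstacle.} There is no serious analytic obstacle here: once the two spaces are Fréchet and the continuous inclusion $\araspace\subset\asmooth$ is in hand from Theorem~\ref{the:regularityandtopology}, everything reduces to the Open Mapping Theorem plus the standard characterization of continuity of linear maps in terms of seminorm estimates. The only point requiring a little care is the bookkeeping between $d(f,\Pi_n)$ and $d(f,\Pi_{n-1})$ (so that the seminorms $q_k$ of $\araspace$, defined with $\Pi_{n-1}$, correctly produce and are produced by the $\Pi_n$-statements in $(3)$ and $(4)$), and noting in $(4)$ that one may always enlarge the finite set of derivative-orders so as to take them strictly increasing with $j_1=0$ absorbed if needed. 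The conceptual content of the proposition lies entirely in recognizing that the abstract functional-analytic framework converts a hypothetical set-theoretic coincidence into a quantitative Jackson inequality with an $f$-independent constant.
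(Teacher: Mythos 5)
Your argument is correct and is essentially the paper's own proof: both reduce everything to the Open Mapping Theorem applied to the continuous inclusion $\araspace\subset\asmooth$ from Theorem~\ref{the:regularityandtopology}, together with the fact that a continuous seminorm on a Fr\'echet space is dominated by finitely many of the defining seminorms \cite[Proposition~1.2.8]{KaRi}; the paper merely arranges the implications as $(2)\Rightarrow(1)$, $(1)\Rightarrow(2)$, $(4)\Rightarrow(3)$, $(3)\Rightarrow(1)$, $(2)\Rightarrow(4)$, and in the last step works with continuity of the single seminorm $\tilde q_k(f)=\sup_{n\geq 0}n^k d(f,\Pi_n)_{\altwo}$ rather than of the identity map, which amounts to the same thing. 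One small correction: in your step $(3)\Rightarrow(2)$ the parenthetical is backwards, since $\Pi_{n-1}\subset\Pi_n$ gives $d(f,\Pi_{n-1})\geq d(f,\Pi_n)$, so passing from condition $(3)$ to the $\Pi_{n-1}$-formulation used in the definition of $\araspace$ does not follow from the containment as cited but from the reindexing $n\mapsto n+1$ together with $(n+1)^k\leq 2^k n^k$ (or simply from the equivalence of the two formulations already noted in Section~\ref{sec:intro}); this is a harmless slip, not a gap.
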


\begin{proof}
Clearly (2) implies (1). Conversely, if (1) holds, then the Open Mapping Theorem shows that the continuous inclusion map $\araspace\subset\asmooth$ is actually a topological isomorphism. Hence (1) implies (2). It is clear that (4) implies (3). Since (3) is equivalent to stating that $\asmooth\subset\araspace$, and the reverse inclusion is already known to be true, (3) implies (1). Hence the proof will be complete once we show that (2) implies (4). For this, we fix $k\geq 0$ and note (cf.~Theorem~\ref{the:HilbertFrechet}) that $\tilde q_k:\araspace\to\R_{\geq 0}$, defined by
\begin{equation*}
\tilde q_k(f)=\sup_{n\geq 0} n^k d(f,\Pi_n)_{\altwo}\quad(f\in\araspace),
\end{equation*}
is a continuous seminorm on $\araspace$. Hence, by assumption, it is a continuous seminorm on $\asmooth$, where the topology is defined by the family $\{\ajseminorm : j=0,1,2,\ldots\}$ of seminorms as in \eqref{eq:ajseminormdef}.  But then, by \cite[Proposition~1.2.8]{KaRi}, there exist a constant $C$, an integer $r>0$, and integers $0\leq j_1<j_2<\cdots<j_r$, such that, for all $f\in\asmooth$,
\begin{equation*}
 \tilde q(f)\leq C \sum_{i=1}^r q_{\alpha,j_i}(f).
\end{equation*}
This is the statement in (4).
\end{proof}

Even though Proposition~\ref{prop:equivalences} is only a ``what if''-result, it still brings to the foreground the potential use of combining results in approximation theory of a classical nature with functional analytic methods. This has already been implicit in the rest of the paper, but here it is particularly clear.

For example, it is obvious that (4) implies that $\asmooth\subset\araspace$, but for the converse inclusion (which we know to be true) we used the completeness of $\asmooth$, which was ultimately based on the Sobolev Embedding Theorem.

The fact that the statement in (1) about sets implies a Jackson-type inequality as in \eqref{eq:aactualJacksonuniform} is perhaps even more illustrative. Indeed, once we know that the spaces in (2) are complete, and that one of the inclusions is continuous (which we know to be true by proof using the Markov inequality), the Open Mapping Theorem shows immediately that, if these spaces are equal, they must then be topologically isomorphic. As in the above proof, the Jackson-type inequality \eqref{eq:aactualJacksonuniform} is then a direct consequence of a general functional analytic principle for continous seminorms on locally convex spaces.

The most remarkable consequence of mixing classical approximation theory with functional analysis, however, seems to be the equivalence of (3) and (4). There does not seem to be an a priori reason why a pointwise Jackson-type inequality as in \eqref{eq:aactualJackonpointwise}, with an ``undetermined'' right hand side, should imply a uniform inequality as in \eqref{eq:aactualJacksonuniform}, with a right hand side as occurring in the literature. Nevertheless this must be the case, as a consequence of the Markov inequality, the Sobolev Embedding Theorem and the Open Mapping Theorem combined.

\bibliographystyle{amsplain}

\end{document}